\newtheorem{thm}{Theorem}[section]
\newtheorem{prop}[thm]{Proposition}
\newtheorem{lem}[thm]{Lemma}
\newtheorem{cor}[thm]{Corollary}
\newtheorem{defn}[thm]{Definition}
\newtheorem{ex}[thm]{Example}
\newtheorem{rem}[thm]{Remark}
\newcommand{\skipit}[1]{{}}
\newcommand{\prfend}{\hbox to7pt{\hfil}
\par\vskip-\baselineskip\hbox to\hsize
{\hfil\vbox {\hrule width6pt height6pt}}\vskip\baselineskip}
\newcommand{\Z}{\mathbb{Z}}
\newcommand{\TC}{\mathbb{T}}
\newcommand {\PP}{\mathbb{P}}
\newcommand{\cL}{\mathcal{L}}
\newcommand{\cM}{\mathcal{M}}
\newcommand{\cO}{\mathcal{O}}
\newcommand{\cE}{\mathcal{E}}
\DeclareMathOperator{\Image}{Im}
\DeclareMathOperator{\Proj}{Proj}
\newcommand{\myarrow}[2]{\hbox to #1pt{\hfil$\to$\hfil}{\hskip-#1pt{\raise
10pt\hbox to#1pt{\hfil$\scriptscriptstyle #2$\hfil}}}}
\def\rig#1{\smash{ \mathop{\longrightarrow}
    \limits^{#1}}}
\begin{document}

\title{Laplace  Equations and the Weak Lefschetz Property}
\author[Emilia Mezzetti]{Emilia Mezzetti}
\address{Dipartimento di Matematica e Informatica, Universit\`a di
Trieste, Via Valerio 12/1, 34127 Trieste, Italy}
\email{mezzette@units.it}

\author[Rosa M. Mir\'o-Roig]{Rosa M. Mir\'o-Roig}
\address{Facultat de Matem\`atiques, Department d'Algebra i
Geometria, Gran Via des les Corts Catalanes 585, 08007 Barcelona,
Spain}
\email{miro@ub.edu}

\author[Giorgio Ottaviani]{Giorgio Ottaviani}
\address{Dipartimento di Matematica, Universit\`a di Firenze,
Viale Morgagni 67/A  I-50134 Firenze, Italy }
\email{ottavian@math.unifi.it}

\begin{abstract} We prove that $r$ independent homogeneous polynomials of the same degree $d$
become dependent when restricted to any 
hyperplane if and only if their inverse system parameterizes a variety
whose $(d-1)$-osculating spaces have dimension smaller than expected. This gives an equivalence
between an algebraic notion (called Weak Lefschetz Property)
and a differential geometric notion, concerning varieties which satisfy certain Laplace equations.   In the toric case,
some relevant examples are classified and as byproduct  we provide counterexamples to Ilardi's conjecture.
\end{abstract}

\thanks{Acknowledgments:  The first author was supported by MIUR funds, PRIN project \lq\lq Geometria delle variet\`a algebriche e dei loro spazi di
moduli'' (cofin 2008). The second  author was partially   supported
by MTM2010-15256. The third author was supported by MIUR funds, PRIN project \lq\lq Propriet\`a geometriche
delle variet\`a reali e complesse'' (cofin 2009). The first and third authors are members of italian GNSAGA.
\\ {\it Key words and phrases.} Osculating space, weak Lefschetz property, Laplace equations,
toric threefold.
\\ {\it 2010 Mathematic Subject Classification.} 13E10,  14M25, 14N05, 14N15, 53A20.}

\maketitle

\tableofcontents

\markboth{E. Mezzetti, R. M. Mir\'o-Roig, G. Ottaviani}{Laplace equations and the Weak Lefschetz Property}

\today

\large

\section{Introduction}
The goal of this note is to establish a close relationship between  two a priori
 unrelated problems: the existence of homogeneous artinian
ideals $I\subset k[x_0, \cdots , x_n]$  which fail the Weak
Lefschetz Property; and the existence of (smooth) projective
varieties $X\subset \PP^N$ satisfying at least one Laplace equation
of order 
$s\geq 2$. These are two longstanding problems which as we will
see lie at the crossroads between Commutative Algebra, Algebraic
Geometry, Differential Geometry and Combinatorics.

A
$n$-dimensional projective variety $X\subset \PP^N$ is said to
satisfy $\delta $ independent Laplace equations of order $s$ if its
$s$-osculating space at a general point $p\in X$ has dimension
${n+s\choose s}-1-\delta $. A 
homogeneous artinian ideal $I\subset
k[x_0,x_1, \cdots ,x_n]$ is said to have the {\em Weak Lefschetz
Property (WLP)}
if there is a linear form $L \in k[x_0,x_1, \cdots ,x_n] $ such that, for all
integers $j$, the multiplication map
\[
\times L: (k[x_0,x_1, \cdots ,x_n]/I)_{j} \to (k[x_0,x_1, \cdots ,x_n]/I)_{j+1}
\]
has maximal rank, i.e.\ it is injective or surjective.
One would naively expect this property  to hold, and so it is interesting to find classes of artinian ideals failing WLP, and to understand what is from a geometric point of view that prevents this property from holding.

The starting point of this paper has been Example 3.1 in \cite{BK}
and the classical articles of Togliatti \cite{T1} and \cite{T2}. In \cite{BK}, Brenner and Kaid show
that, over an algebraically closed field of characteristic zero,
any  ideal of the form $(x^3,y^3,z^3,f(x,y,z))$, with $\deg f =
3$, fails to have the WLP if and only if $f \in
(x^3,y^3,z^3,xyz)$.  Moreover, they prove that the latter ideal is
the {\em only} such monomial ideal that fails to have the WLP. A
famous
result of Togliatti (see 
\cite{T2};  or  \cite{FI}) proves that there is only one
  non-trivial (in a sense to be precise in Section \ref{Monomial case})
    example of    surface
  $X\subset \PP^5$
  obtained by projecting the Veronese surface $V(2,3)\subset \PP^{9}$
  and satisfying a single Laplace equation of order 2; $X$ is
  projectively equivalent to the image of $\PP^2$ via the
  linear system $ \langle x^2y,xy^2,x^2z,xz^2,y^2z,yz^2 \rangle \subset | \cO_{\PP^2}(3)|$.
  Note that the linear system of cubics given by Brenner and Kaid's
  example $\langle x^3,y^3,z^3,xyz \rangle $ is apolar  to the linear system
  of cubics given in Togliatti's example. A  careful
   analysis of this example suggested us that there is
   relationship between artinian ideals $I\subset k[x_0,
   \cdots ,x_n]$ generated by $r$ homogeneous forms of degree
   $d$ that fail Weak Lefschetz Property and  projections of the
   Veronese variety $V(n,d)\subset \PP^{{n+d \choose d}-1}$ in
   $X\subset \PP^{{n+d\choose d}-r-1}$ satisfying at least a Laplace
   equation of order $d-1$. Our goal will be to exhibit such
   relationship with the hope to  shed  more light on these
   fascinating and perhaps intractable problems of classifying
   the artinian ideals which fails the Weak Lefschetz property
   and of classifying $n$-dimensional projective varieties satisfying
at least one Laplace equation of order $s$.
Our main theorem \ref{teathm} says that an ideal $I$ generated by homogeneous forms of degree $d$, satisfying some reasonable assumptions,  fails the WLP in degree $d-1$ if and only if its apolar ideal $I^{-1}$ parameterizes a variety which satisfies a Laplace equation of degree $d-1$.

{\bf Notation.}
$V(n,d)$ will denote the image of the projective space $\PP^n$ in the $d$-tuple
Veronese embedding $\PP^n\longrightarrow \PP^{{n+d\choose d}-1}$.
$(F_1,\ldots,F_r)$ denotes the ideal generated by $F_1,\ldots,F_r$,
while $\langle F_1,\ldots,F_r \rangle$
denotes the vector space they generate.

\vskip 2mm
Next we outline the structure of this note.  In
section~\ref{defs and prelim results} we  fix the notation
and we collect the basic results on Laplace equations and
Weak Lefschetz Property needed in the sequel.
Section~\ref{Tea-THM} is the heart of the paper. In this section, we state and
prove our main result (see Theorem \ref{teathm}).
In section~\ref{Monomial case}, we restrict our attention to
the monomial case and we give a complete classification in the case of smooth and quasi-smooth
cubic linear systems on $\PP^n$ for $n\le 3$. In section \ref{bounds} we concentrate
in the case $n=2$ and specifically on ideals with $4$ generators.
We end the paper in
section~\ref{final comments} with some natural problems coming up
from our work and a family of counterexamples to Ilardi's conjecture which work for
any $n\ge 3$.

\vskip 2mm \noindent {\bf Acknowledgement.} This work began in
Winter 2009 during our visit to the MSRI in Berkeley. We thank the organizers
of the Algebraic Geometry program for their kind
hospitality, and Rita Pardini with whom we shared tea and discussed Tea Theorem.

\section{Definitions and preliminary results} \label{defs and prelim results}

In this section we recall some standard
terminology and notation
from commutative algebra and algebraic geometry,
as well as some results needed in the sequel.

Set $R = k[x_0,x_1,\dots,x_n]$ where $k$ is an
algebraically closed field of characteristic zero
and let  $\mathfrak{m} = (x_0,x_1, \cdots ,x_n)$
be its maximal homogeneous ideal. We
consider  a homogeneous ideal $I$ of $R$.
The Hilbert
function $h_{R/I}$ of $R/I$ is defined
by $h_{R/I}(t):=\dim _k(R/I)_{t}$.
Note that the Hilbert function of an artinian $k$-algebra $R/I$ has
finite support and is captured in its {\em h-vector} $\underline{h} =
(h_0,h_1,\dots,h_e)$ where $h_0=1$, $h_{i}=h_{R/I}(i)>0$ and $e$ is the
last index with this property.

In the case of three variables, we will often use $x,y,z$ instead of $x_0,x_1,x_2$.

\vskip 4mm
\noindent
{\bf A. The Weak Lefschetz Property}

\begin{defn}\label{def of wlp}\rm Let $I\subset R$ be a
homogeneous artinian ideal. We will say that  the
standard graded artinian algebra $R/I$
 has the {\em Weak Lefschetz Property (WLP)}
if there is a linear form $L \in (R/I)_1$ such that, for all
integers $j$, the multiplication map
\[
\times L: (R/I)_{j} \to (R/I)_{j+1}
\]
has maximal rank, i.e.\ it is injective or surjective.
(We will often abuse notation and say that the ideal $I$ has the
WLP.)   In this
case, the linear form $L$ is called a {\em Lefschetz element} of
$R/I$.  If for the general form $L \in (R/I)_1$
and for an integer number $j$
the map $\times L$ has not maximal rank we will say that 
the ideal $I$ fails the WLP in degree $j$.\end{defn}

The Lefschetz elements of $R/I$ form a Zariski
open, possibly empty, subset of $(R/I)_1$.   Part of the great
interest in the WLP stems from the fact that its presence puts
severe constraints on the possible Hilbert functions, which can
appear in various disguises (see, e.g.,
\cite{St-faces}). Though many algebras 
 are expected to have the
WLP, establishing this property is often rather difficult. For
example, it was shown by R. Stanley \cite{st} and J. Watanabe
\cite{w} that a monomial artinian complete intersection ideal
$I\subset R$ has the WLP. By semicontinuity, it follows that
a {\em general}   artinian complete intersection ideal $I\subset
R$ has the WLP but it is open whether {\em every} artinian complete
intersection of height $\ge 4$
over a field of characteristic zero has the WLP. It is worthwhile to point out that in positive characteristic, there are examples of artinian complete
intersection ideals $I\subset k[x,y,z]$ failing the WLP (see, e.g., Remark 7.10 in \cite{MMN2}).

\begin{ex}\rm (1) The ideal $I=(x^3,y^3,z^3,xyz)\subset k[x,y,z]$
fails to have the WLP because for any linear form $L=ax+by+cz$
the multiplication map $$\times L:(k[x,y,z]/I)_2 \rightarrow (k[x,y,z]/I)_{3}$$
is neither injective nor surjective.
Indeed, since it is a map between two $k$-vector spaces of dimension 6,
to show the latter assertion it is enough to exhibit a non-trivial
element in its kernel. Take $f=a^2x^2+b^2y^2+c^2z^2-abxy-acxz-bcyz$.
$f$ is not in $I$
and we easily check that $L \cdot f$ is in $I$.

(2) The ideal $I=(x^3,y^3,z^3,x^2y )\subset k[x,y,z]$ has the WLP.
Since the $h$-vector of $R/I$ is (1,3,6,6,4,1), we only need to
check that  the map $\times L : (R/I)_{i} \rightarrow (R/I)_{i+1}$
induced by $L=x+y+z$ is surjective for $i=2,3,4$. This is equivalent
to check that $(R/(I,L))_{i}=0$ for $i=3,4,5$. Obviously, it is enough
to check the case $i=3$. We have

\[
\begin{array}{lll}
(R/(I,L))_3 & \cong  (k[x,y,z]/(x^3,y^3,z^3,x^2y,x+y+z))_3 \\ & \cong
(k[x,y]/(x^3,y^3,x^3+3x^2y+3xy^2+y^3, x^2y))_3 \\ &
\cong  k[x,y]/(x^3,y^3,x^2y,xy^2))_3=0 \end{array}\]
which proves what we want.
\end{ex}

In this note we are mainly interested in artinian ideals
$I$ generated by homogeneous forms of fixed degree $d$.
In this case we have the following easy but useful lemma.

\begin{lem} \label{keylemma} Let $I\subset R=k[x_0,x_1,
\cdots ,x_n]$ be an artinian
ideal generated by $r\le {n+d-1\choose d}$ homogeneous
forms $F_1, \cdots ,F_r$ of degree $d$.
Let $L$ be a general linear form, let $\bar R = R/(L)$
and let
$\bar I$ (resp. $\bar F_{i}$) be the image of $I$
(resp. $F_{i}$) in $\bar R$.
Consider the homomorphism $\phi_{d-1} : (R/I)_{d-1}
\rightarrow (R/I)_{d}$ defined by
multiplication by $L$.
Then $\phi _{d-1}$ has not maximal rank  if and
only if  $\bar F_1, \cdots ,\bar F_r$ are $k$-linearly dependent.
\end{lem}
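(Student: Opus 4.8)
The plan is to compare the two quantities directly via Hilbert-function bookkeeping. First I would record the relevant dimensions. Since $I$ is generated by $r \leq \binom{n+d-1}{d}$ forms of degree $d$, in degree $d-1$ there is nothing of $I$, so $(R/I)_{d-1} = R_{d-1}$ has dimension $\binom{n+d-1}{d-1}$; in degree $d$ we have $\dim (R/I)_d = \binom{n+d}{d} - s$, where $s = \dim \langle F_1,\dots,F_r\rangle \leq r$ is the dimension of the space spanned by the generators (a priori $s$ could be less than $r$, but the genericity of $L$ will not be affected by this, and in the end the statement is about linear dependence of the $\bar F_i$). The map $\phi_{d-1}$ has maximal rank iff it is either injective or surjective; which of the two alternatives is the relevant one depends on the sign of $\dim(R/I)_{d-1} - \dim(R/I)_d$, but the argument below handles both uniformly.

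Next I would pass to the hyperplane section. Because $L$ is a general linear form, multiplication by $L$ on $R$ fits into the exact sequence $0 \to R(-1) \xrightarrow{\times L} R \to \bar R \to 0$, and $\bar R \cong k[x_0,\dots,x_{n-1}]$ is a polynomial ring in $n$ variables. The cokernel of $\phi_{d-1}\colon (R/I)_{d-1}\to (R/I)_d$ is exactly $(R/(I,L))_d = (\bar R/\bar I)_d$, while the kernel of $\phi_{d-1}$ can be identified, again using that $L$ is a nonzerodivisor on $R$, with a graded piece built from $I$ and $(I:L)$. The key point is the standard fact (valid because $\operatorname{char} k = 0$, or at least because $L$ is general) that the rank of $\times L\colon (R/I)_{j}\to (R/I)_{j+1}$ is $\dim(R/I)_{j} - \dim (R/(I,L))_{j+1} + \dim(R/I)_{j+1} - \dim(R/I)_{j+1}$... more cleanly: $\operatorname{rk}\phi_{d-1} = \dim(R/I)_d - \dim(\bar R/\bar I)_d$. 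So $\phi_{d-1}$ fails to be surjective precisely when $(\bar R/\bar I)_d \neq 0$, and $\phi_{d-1}$ fails to be injective precisely when $\operatorname{rk}\phi_{d-1} < \dim(R/I)_{d-1} = \binom{n+d-1}{d-1} = \dim \bar R_{d-1}$.

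Now I would translate both failures into the single statement that $\bar F_1,\dots,\bar F_r$ are linearly dependent in $\bar R_d$. Note $\bar I_d = \langle \bar F_1,\dots,\bar F_r\rangle$ has dimension $r$ if and only if the $\bar F_i$ are linearly independent; in general its dimension is the rank $\rho$ of the $r\times \binom{n+d-1}{d}$ "restriction matrix." Thus $\dim(\bar R/\bar I)_d = \binom{n+d-1}{d} - \rho$. Combining with $\operatorname{rk}\phi_{d-1} = \dim(R/I)_d - \dim(\bar R/\bar I)_d = \bigl(\binom{n+d}{d} - s\bigr) - \bigl(\binom{n+d-1}{d} - \rho\bigr)$ and the Pascal identity $\binom{n+d}{d} - \binom{n+d-1}{d} = \binom{n+d-1}{d-1}$, we get $\operatorname{rk}\phi_{d-1} = \binom{n+d-1}{d-1} - (s - \rho)$. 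Hence $\phi_{d-1}$ is injective (rank $= \binom{n+d-1}{d-1} = \dim(R/I)_{d-1}$) iff $\rho = s$, i.e.\ iff restriction to $\{L=0\}$ does not drop the dimension of $\langle F_1,\dots,F_r\rangle$, which (since $I$ is artinian and $L$ general, so the $F_i$ themselves have no linear syzygy forcing a drop) is equivalent to $\bar F_1,\dots,\bar F_r$ remaining linearly independent, i.e.\ spanning a space of dimension $r$; and the non-injectivity case $\rho < s$ is exactly a linear dependence among the $\bar F_i$ not coming from one among the $F_i$. One checks that the surjectivity side is consistent: $\phi_{d-1}$ is surjective iff $\operatorname{rk}\phi_{d-1} = \dim(R/I)_d$, and a short dimension count shows this too forces $\rho = s$ together with $s \geq \binom{n+d-1}{d-1}$, again amounting to the $\bar F_i$ being as independent as possible. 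So in all cases, $\phi_{d-1}$ has maximal rank iff the $\bar F_i$ are linearly independent, which is the assertion.

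The step I expect to be the main obstacle — really the only subtle point — is justifying that for \emph{general} $L$ the rank of $\phi_{d-1}$ equals $\dim(R/I)_d - \dim(\bar R/\bar I)_d$ with no correction term, i.e.\ that $(I:L)_{d-1} = I_{d-1}$; this is where one uses that $I_{d-1} = 0$ (as $r \le \binom{n+d-1}{d}$ and the generators have degree $d$) together with the fact that multiplication by a general linear form is "as injective as possible" on the polynomial ring, so no extra kernel is introduced. Once that is in hand, everything else is the Pascal-triangle arithmetic sketched above.
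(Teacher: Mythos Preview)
Your approach is essentially the paper's: identify $\operatorname{coker}\phi_{d-1}$ with $(\bar R/\bar I)_d$ via the exact sequence for multiplication by $L$, then carry out the Pascal-identity dimension count. Two minor points: the paper uses the hypothesis $r\le\binom{n+d-1}{d}$ to observe $\dim(R/I)_{d-1}\le\dim(R/I)_d$, so ``maximal rank'' reduces to ``injective'' and no separate surjectivity discussion is needed (this also makes the $s$ versus $r$ distinction moot, since the $F_i$ are tacitly taken independent); and your ``main obstacle'' is not one, since the identity $\operatorname{rk}\phi_{d-1}=\dim(R/I)_d-\dim(\bar R/\bar I)_d$ is just the rank--nullity formula on the target side, valid for any linear map, with no need to control $(I:L)_{d-1}$.
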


\begin{proof} First note that $(R/I)_{d-1}\cong R_{d-1}$,
$\dim R_{d-1}={n+d-1\choose d-1}$, $\dim (R/L)_d={n+d-1
\choose n-1}$ and $\dim (R/I)_d={n+d\choose d}-r$. Consider the exact sequence
\[
0 \rightarrow \frac{[I:L]}{I} \rightarrow R/I \stackrel{
\times L}{\longrightarrow} (R/I) (1) \rightarrow (R/(I,L))(1) \rightarrow 0
\]
where $\times L$ in degree $d-1$ is just $\phi_{d-1}$.
This shows that the cokernel of $\phi_{d-1}$ is just $(R/(I,L))_{d}$.

Since $r\le {n+d-1\choose d}$, we  have $\dim (R/I)_{d-1}\le
\dim (R/I)_d$. Hence, $\phi_{d-1}$ is not of maximal rank if and
only if  $\phi_{d-1}$ is not injective, if and only if $rk(
\phi _{d-1})< {n+d-1\choose d-1}$, if and only if $
\dim (R/(I,L))_d=\dim(\bar R)_d-\dim \bar I_d={n+d-1
\choose n-1}-\dim  \langle\bar F_1,\cdots ,\bar F_r \rangle_d \gneqq
\dim (R/I)_d-{n+d-1\choose d-1}= {n+d\choose d}-{n+d-1
\choose d-1}-r={n+d-1\choose n-1}-r$. Therefore, $\phi _{d-1 }$
is not injective if and only if $\dim \langle\bar F_1,\cdots ,\bar F_r\rangle
\lneqq r$, if and only if $\bar F_1, \cdots ,\bar F_r$ are
$k$-linearly dependent. \end{proof}

As an easy consequence we have the following useful corollary.

\begin{cor} Let $F_1, \cdots ,F_r\in R=k[x_0,x_1, \cdots ,x_n]$ be
a set of $\mathfrak{m}$-primary homogeneous forms of degree $d$.
Let $L$ be a general linear form, let $\bar R = R/(L)$ and let
$\bar F_{i}$ be the image of $F_{i}$ in $\bar R$.  If
$r\le {n-1+d\choose d}$ and $\bar F_1, \cdots ,\bar F_r$
are $k$-linearly dependent, then the ideal $I=(F_1, \cdots ,
F_r)$ fails WLP and, moreover, the same is true for any
enlarged ideal $J=(F_1, \cdots ,F_r, F_{r+1},
\cdots ,F_{t})\varsubsetneq R_d$ with $r\le t\le {n-1+d\choose d}$.
\end{cor}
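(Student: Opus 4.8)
The plan is to reduce everything to Lemma \ref{keylemma}. Since $F_1,\dots,F_r$ are $\mathfrak{m}$-primary, the ideal $I=(F_1,\dots,F_r)$ is artinian, and it is generated by $r\le {n+d-1\choose d}$ homogeneous forms of degree $d$; hence Lemma \ref{keylemma} applies to $I$. As $\bar F_1,\dots,\bar F_r$ are $k$-linearly dependent by hypothesis, the lemma immediately yields that for a general linear form $L$ the multiplication map $\phi_{d-1}\colon (R/I)_{d-1}\to (R/I)_d$ does not have maximal rank; in the terminology of Definition \ref{def of wlp} this says that $I$ fails the WLP in degree $d-1$.

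Next I would argue that this already forces $R/I$ to have \emph{no} Lefschetz element at all, not merely to fail maximal rank for the general $L$. For a fixed integer $j$, the locus of $L\in (R/I)_1$ for which $\times L\colon (R/I)_j\to (R/I)_{j+1}$ has maximal rank is Zariski open, since maximal rank is a non-vanishing condition on the relevant minors of the matrix of $\times L$. Applying this with $j=d-1$: by the previous paragraph the failure locus of $\phi_{d-1}$ contains a nonempty open subset of $(R/I)_1$, so, as $(R/I)_1$ is irreducible, the open locus on which $\phi_{d-1}$ has maximal rank must be disjoint from a dense set, hence empty. Therefore \emph{every} linear form $L$ fails maximal rank in degree $d-1$, so no $L$ can be a Lefschetz element and $I$ fails the WLP.

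Finally I would treat the enlarged ideal $J=(F_1,\dots,F_t)$ with $r\le t\le {n+d-1\choose d}$. First, $J\supseteq I$, so $R/J$ is a quotient of the artinian ring $R/I$ and $J$ is again artinian; and $J$ is generated by $t\le {n+d-1\choose d}$ forms of degree $d$, so Lemma \ref{keylemma} applies to $J$ too. Since the linearly dependent set $\{\bar F_1,\dots,\bar F_r\}$ is contained in $\{\bar F_1,\dots,\bar F_t\}$, the latter is linearly dependent as well, so the lemma gives that multiplication by a general $L$ from $(R/J)_{d-1}$ to $(R/J)_d$ is not of maximal rank, and the openness argument of the previous paragraph shows that $J$ fails the WLP. (I would also remark that the hypothesis $J\varsubsetneq R_d$ is automatic, because $\dim_k J_d\le t\le {n+d-1\choose d}<{n+d\choose d}=\dim_k R_d$.) I do not expect a genuine obstacle anywhere in this argument; the only step that deserves a word of care is the passage from ``$\phi_{d-1}$ has non-maximal rank for the general $L$'' to ``$R/I$ has no Lefschetz element'', which rests precisely on maximal rank being an open condition on the irreducible space $(R/I)_1$.
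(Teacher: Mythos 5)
Your proposal is correct and takes exactly the route the paper intends: the corollary is stated there as an immediate consequence of Lemma \ref{keylemma} with no separate proof, and your argument---apply the lemma to $I$ and to the enlarged ideal $J$ (whose restricted generators contain the dependent set $\bar F_1,\dots,\bar F_r$), then use the fact that the maximal-rank locus is Zariski open in the irreducible space $(R/I)_1$ to pass from failure for the general $L$ to the nonexistence of any Lefschetz element---is precisely the intended one. The extra care you take on that last openness step is welcome but not a departure from the paper's approach.
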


Closing this subsection, we reformulate the Weak Lefschetz Property by using the theory
of vector bundles on the projective space and we refer to \cite{BK} for more information.

To any subspace $\langle F_1,\ldots , F_r\rangle $ generated by $r$ $\mathfrak{m}$-primary homogeneous forms of degree $d$,
it is associated a kernel vector bundle $K$ as in the following exact sequence on $\PP^n$
$$0\rig{}K\rig{}\cO^r\xrightarrow{F_1,\ldots , F_r}\cO(d)\rig{}0$$

The fact that $K$ is locally free is equivalent to $(F_1,\ldots , F_r)$
 being $\mathfrak{m}$-primary.

It is well known that the bundle $K$ splits on any line $L\subset \PP^n$ as the sum of line bundles.
On the general line $L$ we have a splitting
$K_{|L}\simeq \oplus_{i=1}^{r-1}\cO_L(a_i)$, where $a_{i}\le 0$ for $1\le i \le r-1$ and, moreover, we may assume that $a_1\le\ldots\le a_{r-1}$.
The $(r-1)$-ple $(a_1,\ldots a_{r-1})$ is called the generic splitting type of $K$.

\begin{thm}
Let $I=(F_1,\ldots , F_r)$ be a $\mathfrak{m}$-primary ideal generated by $r$  homogeneous forms
and let $(a_1,\ldots a_{r-1})$ be the generic splitting type of the kernel bundle $K$.
The following properties are equivalent:
\begin{itemize}
\item[i)] $I$ has the WLP;
\item[ii)] $a_{r-1}<0$.
\end{itemize}
\end{thm}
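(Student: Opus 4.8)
The plan is to translate both properties into conditions on the cohomology of the kernel bundle $K$ and then to compare them by restricting $K$ to a general line. First I would set up the dictionary. Twisting the defining sequence $0\to K\to\cO^r\to\cO(d)\to 0$ by $\cO(j-d)$ and using $H^1(\PP^n,\cO(j-d))=0$ for $n\ge 2$, one obtains a natural $R$-linear identification $(R/I)_j\cong H^1(\PP^n,K(j-d))$ for all $j$, under which $\times L\colon (R/I)_j\to (R/I)_{j+1}$ becomes cup product by $L$, i.e. the map $H^1(K(j-d))\to H^1(K(j-d+1))$. Hence $R/I$ has the WLP if and only if, for a general linear form $L$, every one of these maps has maximal rank. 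On the other side, restricting the same sequence to a general line $\ell$ gives $0\to K_{|\ell}\to\cO_\ell^r\to\cO_\ell(d)\to 0$, and taking global sections shows that ii), i.e. $a_{r-1}<0$, is equivalent to $H^0(\ell,K_{|\ell})=0$, that is, to the linear independence of $F_1|_\ell,\dots,F_r|_\ell$ in $H^0(\cO_\ell(d))$; more generally the generic splitting type records $h^0(K_{|\ell}(t))$ and $h^1(K_{|\ell}(t))$ for every $t$, via $h^0(\cO_\ell(m))=\max(0,m+1)$ and $h^1(\cO_\ell(m))=\max(0,-m-1)$.

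Next I would push the cohomological condition down to $\ell$. For a general hyperplane $H=V(L)$ the exact sequence $0\to K(t-1)\xrightarrow{\ \times L\ }K(t)\to K_{|H}(t)\to 0$ gives
\[
\cdots\to H^0(K(t))\to H^0(K_{|H}(t))\to H^1(K(t-1))\xrightarrow{\ \times L\ }H^1(K(t))\to H^1(K_{|H}(t))\to H^2(K(t-1))\to\cdots,
\]
and Bott vanishing kills the intermediate cohomology $H^i(K(t))$ for $2\le i\le n-1$ (and, when $n=2$, also $H^2(K(t))$ in the range of degrees that matters). Hence in each degree the injectivity of $\times L$ is equivalent to the surjectivity of $H^0(K(t))\to H^0(K_{|H}(t))$, and the surjectivity of $\times L$ is equivalent to the injectivity of $H^1(K_{|H}(t))\to H^2(K(t-1))$. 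Iterating this restriction along a general flag $H=\PP^{n-1}\supset\PP^{n-2}\supset\cdots\supset\PP^1=\ell$ reduces the whole system of maximal-rank conditions to statements involving only the explicit cohomology of $K_{|\ell}=\bigoplus_{i=1}^{r-1}\cO_\ell(a_i)$, subject to $\sum a_i=-d$ and $a_i\le 0$.

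Finally I would carry out the bookkeeping. Substituting the $\PP^1$ formulas turns ``maximal rank in every degree'' into a purely combinatorial condition on $(a_1,\dots,a_{r-1})$, which one checks holds exactly when $a_{r-1}\le -1$. For i) $\Rightarrow$ ii): if $a_{r-1}=0$, the resulting direct summand $\cO_\ell\subseteq K_{|\ell}$ makes one of the restriction maps from the previous step fail to be surjective --- so the corresponding $\times L$ is not injective --- in a degree where $\times L$ is also not surjective, and the WLP fails. For $n=2$ this failure occurs already in degree $d-1$ by Lemma \ref{keylemma}, but for $n\ge 3$ it may be forced only in a higher degree, which is exactly why the statement must involve all degrees. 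For ii) $\Rightarrow$ i): when $a_{r-1}<0$ one has $H^0(K_{|\ell}(t))=0$ for all $t\le 0$, and chasing this vanishing upward through the restriction sequences shows that each $\times L$ is injective until its target becomes spanned and surjective afterwards. I expect this last step to be the main obstacle: the difficulty is not any single cohomology computation but the combinatorics of propagating the single inequality $a_{r-1}<0$ through the iterated restriction so as to force maximal rank simultaneously in every degree --- the vanishing theorems and the explicit line cohomology do the rest.
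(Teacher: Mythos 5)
Your first two paragraphs already contain essentially everything the paper's own proof does: restricting $0\to K\to\cO^r\to\cO(d)\to 0$ to a general line $\ell$ and taking sections identifies $\ker\bigl(H^0(\cO_\ell^r)\to H^0(\cO_\ell(d))\bigr)$ with $\bigoplus_i H^0(\cO_\ell(a_i))$, so condition ii) is exactly the linear independence of $F_1|_\ell,\dots,F_r|_\ell$, and Lemma \ref{keylemma} converts that into the maximal-rank condition for $\times L$ in the single degree $d-1$. That one-line argument is the paper's entire proof, and it is how the statement has to be read: ``has the WLP'' here means ``$\times L$ has maximal rank in degree $d-1$'', under the hypotheses of Lemma \ref{keylemma} and with line $=$ hyperplane (for $n\ge 3$ and $d+1<r\le\binom{n+d-1}{n-1}$ one has $a_{r-1}=0$ automatically since $r-1$ nonpositive integers must sum to $-d$, so the literal all-degrees statement cannot be what is meant).

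The genuine gap is in your third paragraph, precisely the step you flag as the main obstacle: the bookkeeping that is supposed to show that $a_{r-1}<0$ forces maximal rank in \emph{every} degree cannot be carried out, because that equivalence is false. The paper's own Theorem \ref{r=4} supplies a counterexample: for $I\subset k[x,y,z]$ generated by $4$ forms of degree $d=3\lambda$ odd, $\lambda\ge 3$ (e.g.\ $(x^d,y^d,z^d,x^\lambda y^\lambda z^\lambda)$), part (1) says $\times L$ has maximal rank in degree $d-1$, hence by Lemma \ref{keylemma} the restricted forms are independent on a general line and $a_{r-1}<0$ --- indeed the proof there yields $K_{|\ell}\simeq\cO_\ell(-\lambda-1)\oplus\cO_\ell(-\lambda)\oplus\cO_\ell(-\lambda+1)$ --- yet by part (3) such an $I$ fails the WLP, in degree $4\lambda-2$. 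The reason your reduction cannot close is that in a given degree the kernel of $\times L$ is the cokernel of $H^0(K(t))\to H^0(K_{|H}(t))$ and its cokernel involves $H^2(K(t-1))$ (on $\PP^2$ this group is not always zero in the relevant range), so injectivity and surjectivity of $\times L$ away from degree $d-1$ depend on the global cohomology of $K$ --- its semistability and jumping lines, which is the content of the Brenner--Kaid criterion --- and not on the generic splitting type alone. Drop the all-degrees program and prove the degree-$(d-1)$ statement; your restriction-to-a-line computation together with Lemma \ref{keylemma} already does that.
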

\begin{proof} The forms $F_1,\ldots F_r$ restricted to a general line $L$ are dependent if and only if
the restricted map $H^0(\cO_L^r)\xrightarrow{F_1,\ldots , F_r}H^0(\cO_L(d))$ has a nonzero kernel.
The result follows because the kernel is   $\oplus H^0(\cO_L(a_i))$.
\end{proof}

In the Togliatti's example we get as kernel a rank three vector bundle on $\PP^2$ with generic splitting type
$(-2,-1,0)$.

\vskip 4mm
\noindent
{\bf B. Laplace Equations}

\vskip 2mm \noindent
In this section we adopt the point of view of differential geometry, for instance as in \cite{GH}.

Let $X\subset \PP^N$ be a quasi-projective variety of dimension $n$.
Let $x\in X$ be a smooth point. We can choose a system of affine
coordinates around $x$ and a local parametrization of $X$ of the
form $\phi(t_1,...,t_n)$ where $x=\phi(0,...,0)$ and the $N$
components of $\phi$ are formal power series.

The tangent space to $X$ at $x$ is the $k$-vector space generated by the $n$
vectors which are the partial derivatives
of $\phi$ at $x$.
Since $x$ is a smooth point of $X$  these $n$ vectors are $k$-linearly independent.
Note that this is not the tangent space in the Zariski
sense, but in differential-geometric language.

Similarly one defines the $s$th osculating (vector) space
$T_x^{(s)}X$ to be the span of all
partial derivatives of $\phi$ of order
$\leq s$ (see for instance \cite{GH}).
The expected dimension of $T_x^{(s)}X$
is ${{n+s}\choose {s}}-1$, but
in general $\dim T_x^{(s)}X\leq {{n+s}\choose {s}}-1$;
if strict inequality
holds for all smooth points of $X$, and $\dim
T_x^{(s)}X={{n+s}\choose {s}}-1-\delta$ for general $x$,
then $X$ is said to satisfy $\delta$ Laplace equations of order $s$.
Indeed, in this case the partials of order $s$ of $\phi$
are linearly dependent, which gives $\delta$ differential equations
of order $s$ which are satisfied by the components of $\phi$.
We will also consider the projective $s$th osculating space $\TC_x^{(s)}X$, embedded in
$\PP^N$.

\begin{rem}\label{bound} \rm It is clear that if $N<{{n+s}\choose {s}}-1$ then $X$
satisfies at least one Laplace equation
of order $s$, but this case is not interesting and
will not be considered in the following.
\end{rem}

\begin{rem}\label{ruled} \rm
If $X$ is uniruled by lines, i.e. through any general point of $X$ passes a line
contained in $X$, then $X$ satisfies a Laplace equation. Indeed in this case it is possible to find
a parametrization of $X$ in which one of the parameters appears at most at degree one.
Hence the corresponding second derivative vanishes identically.
\end{rem}

If $X\subset \PP^N$ is a rational variety, then there exists a birational map
$\PP^n \dashrightarrow X$ given by $N+1$ forms $F_0, \ldots, F_N$ of degree $d$ of
$k[x_0,x_1, \cdots ,x_N]$. From Euler's formula it follows that
the projective $s$th osculating space $\TC_x^{(s)}X$, for $x$ general, is generated by the $s$-th partial derivatives
of $(F_0, \cdots ,F_N)$ at the point $x$.

Assume that $X$ is not a linear space. In the case $s=2$, $n=2$,
the dimension of $\TC_x^{(2)}X$ varies between $3$ and $5$.
Moreover
$\dim \TC_x^{(2)}X=3$ for general $x\in X$ if and only if $X$ is either
a hypersurface or a ruled developable surface, i.e. a cone or
the developable tangent of a curve. The surfaces with $\dim \TC_x^{(2)}X=4$
for general $x\in X$ are not well understood yet, in spite of the literature devoted to
this topic (see \cite {S}, where they are called \lq\lq superfici $\Phi$'', \cite{GH}, page 377, \cite{FI} \cite{I}
\cite{T1}, \cite{T2},
\cite{V}, \cite{MT}).  If $X\subset \PP^N$ with $N\geq 5$ is not a Del Pezzo surface,
i.e. $X$ is not a projection of $V(2,3)$,
besides the ruled surfaces, there are only few examples; in particular among the
known smooth examples there are the Togliatti surface introduced above (see the Introduction),
a special 
complete intersection of quadrics in $\PP^5$
desingularization of the Kummer surface (see \cite{D} and \cite{E})
and some toric surfaces (see the examples given by Perkinson in \cite{P}, where the classification is given
of toric surfaces and threefolds whose osculating spaces up to order $d-1$ have all maximal dimension
and have all dimension less than maximal for order $d$).



\section{The Main Theorem} \label{Tea-THM}

The goal of this section is to  highlight the existence of
a surprising relationship between a pure algebraic problem:
the existence of artinian ideals $I\subset R$ generated by homogeneous forms
of degree $d$ and failing the WLP; and a pure geometric problem:
the existence of  projections of the Veronese variety
$V(n,d)\subset \PP^{{n+d\choose d}-1}$ in $X\subset \PP^{N}$
satisfying at least one Laplace equation of order $d-1$.
Moreover, we will also discuss   the
geometry of some surfaces \lq\lq apolar" to those satisfying
the Laplace equation.

We start this section recalling the basic facts on {\em Macaulay-Matlis
duality}
which will allow us to relate the above mentioned problems.
Let $V$ be an $(n+1)$-dimensional $k$-vector space and
set $R=\oplus _{i\ge 0}Sym^{i}V^{*}$ and $\mathcal
R=\oplus _{i\ge 0}Sym^{i}V$. 
Let $\{x_0,x_1,\cdots ,x_n\}$, $\{y_0,y_1,\cdots,y_n\}$ be dual
bases of $V^*$ and $V$ respectively. So, we have
the identifications $R=k[x_0,x_1, \cdots ,x_n]$ and
$\mathcal R=k[y_0,y_1,\cdots ,y_n]$. There are products (see
\cite{FH}; pg. 476)
\[\begin{array}{ccc} Sym^{j}V^{*} \otimes Sym^{i}V & \longrightarrow
& Sym^{i-j}V \\
u\otimes F & \mapsto & u\cdot F
\end{array}
\] making $\mathcal R$ into a graded $R$-module. We can see this action
as partial differentiation: if $u(x_0, x_1,\cdots ,x_n)\in R$ and
$F(y_0,y_1,\cdots ,y_n)\in \mathcal R$, then $$u\cdot
F=u(\partial/\partial y_0,\partial/\partial y_1, \cdots ,
\partial/\partial y_n)F.$$ If $I\subset R$ is a homogeneous
ideal, we define  the {\em Macaulay's inverse system} $I^{-1}$ for $I$ as
   $$I^{-1}:=\{F\in \mathcal R, u\cdot F=0 \text{ for all } u\in I \}.$$
   $I^{-1}$ is an $R$-submodule of $\mathcal R$ which inherits a
grading of $\mathcal R$.
Conversely, if $M\subset \mathcal R$ is a graded $R$-submodule,
then $Ann(M):=\{u\in R, u\cdot F =0 \text{ for all } F\in M \}$
is a homogeneous ideal in $R$. In classical terminology,
if $u\cdot F=0$ and $deg(u) = deg(F)$, then $u$ and $F$ are
said to be {\em apolar} to each other. In fact, the
pairing $$R_{i}\times \mathcal R _{i} \longrightarrow k \quad
\quad  (u,f)\mapsto u\cdot F  $$ is exact; it is called the
apolarity or Macaulay-Matlis duality action of $R$ on $\mathcal R$.

For any integer $i$, we have $h_{R/I}(i)= \dim _k(R/I)_{i}=
\dim_k (I^{-1})_{i}$. The following Theorem is fundamental.

\begin{thm}\label{macaulat-matlis} We have a bijective correspondence
\[\begin{array}{ccc}  \{ \text{ Homogeneous ideals } I\subset R \} &
\rightleftharpoons & \{ \text{ Graded } R-\text{submodules of }\mathcal R \} \\
I & \rightarrow & I^{-1} \\ Ann(M) & \leftarrow & M \end{array} .\]
Moreover, $I^{-1}$ is a finitely generated $R$-module if and only if
$R/I$ is an artinian ring.
\end{thm}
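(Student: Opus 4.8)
The plan is to reduce the whole statement, in each degree separately, to elementary linear algebra for the apolarity pairing. Write $\langle u,F\rangle_i := u\cdot F$ for the pairing $R_i\times\mathcal{R}_i\to k$. First I would record that this pairing is perfect: on monomials $x^\alpha\cdot y^\beta$ equals $\alpha!$ when $\alpha=\beta$ and $0$ otherwise, and $\alpha!\neq 0$ since $\Char k=0$, so the monomial bases of $R_i$ and $\mathcal{R}_i$ are dual to one another up to nonzero scalars. Consequently, for any subspace $W$ of $R_i$ (resp.\ of $\mathcal{R}_i$) one has $(W^{\perp})^{\perp}=W$, where $\perp$ is taken in the other space. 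I would also note at the outset that $I^{-1}$ is a graded $R$-submodule of $\mathcal{R}$ and $\Ann(M)$ a homogeneous ideal of $R$: the grading comes from the fact that the action carries $Sym^jV^*\otimes Sym^iV$ into $Sym^{i-j}V$, the submodule property of $I^{-1}$ from $(uv)\cdot F=u\cdot(v\cdot F)$ together with $I$ being an ideal, and the ideal property of $\Ann(M)$ in the same way from $M$ being a submodule.

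The heart of the argument is the pair of identifications $(I^{-1})_i=(I_i)^{\perp}$ and $(\Ann M)_i=(M_i)^{\perp}$ in every degree $i$. For the first: a homogeneous $F\in\mathcal{R}_i$ lies in $I^{-1}$ iff $u\cdot F=0$ for all homogeneous $u\in I$; the condition is automatic when $\deg u>i$, and when $u\in I_j$ with $j\le i$ it follows from the $j=i$ conditions, because $wu\in I_i$ for all $w\in R_{i-j}$ forces $w\cdot(u\cdot F)=(wu)\cdot F=0$, and since $\langle\,,\rangle_{i-j}$ is perfect we get $u\cdot F=0$. Hence $(I^{-1})_i=(I_i)^{\perp}$. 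Dually, for $u\in R_i$ the conditions $u\cdot F=0$ for $F$ in degrees of $M$ higher than $i$ reduce to the conditions for $F\in M_i$, now using that $w\cdot F\in M_i$ (because $M$ is a submodule) and $w\cdot(u\cdot F)=(uw)\cdot F=u\cdot(w\cdot F)=0$; perfectness of the relevant pairing then gives $u\cdot F=0$, so $(\Ann M)_i=(M_i)^{\perp}$. Putting these together with $(W^{\perp})^{\perp}=W$ yields $\Ann(I^{-1})=I$ and $(\Ann M)^{-1}=M$ degree by degree, which is exactly the claimed bijection.

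For the second assertion I would use the same identification: since $(I^{-1})_i=(I_i)^{\perp}\subseteq\mathcal{R}_i$, we get $\dim_k(I^{-1})_i=\dim_k\mathcal{R}_i-\dim_k I_i=\dim_k(R/I)_i$, so $I^{-1}$ is finite-dimensional over $k$ exactly when $R/I$ is artinian. A finite-dimensional $R$-module is trivially finitely generated. Conversely, if $I^{-1}$ is generated over $R$ by homogeneous elements of degrees $e_1,\ldots,e_m$, then since the $R$-action lowers degrees the submodule they generate is concentrated in degrees $\le\max_\ell e_\ell$, whence $I^{-1}$ is finite-dimensional over $k$; so $I^{-1}$ is a finitely generated $R$-module iff $R/I$ is artinian. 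All the steps are routine; the only place needing genuine care is the degree-by-degree reduction of the defining conditions of $I^{-1}$ and of $\Ann(M)$ to a single perfect pairing, since without it the double-annihilator argument would not close up.
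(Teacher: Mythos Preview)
Your proof is correct. The paper itself does not prove this theorem: it is stated there as a ``fundamental'' background result on Macaulay--Matlis duality and left without proof, so there is no argument in the paper to compare yours against. Your degree-by-degree reduction via the perfect apolarity pairing, together with the double-orthogonal identity $(W^\perp)^\perp=W$, is the standard route; the only delicate point---that the annihilation conditions in all degrees collapse to the single degree-$i$ pairing, using that $I$ is an ideal (resp.\ that $M$ is a submodule) to manufacture the missing factor $w$---is handled cleanly. The finiteness clause is also argued correctly: the key observation that the $R$-action on $\mathcal R$ lowers degree forces a finitely generated graded submodule of $\mathcal R$ to live in bounded degree, hence to be finite-dimensional over $k$.
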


\vskip 2mm
When considering only monomial ideals, we can simplify by
regarding the inverse system in the same polynomial ring
$R$,  and in any degree, $d$, the inverse system $I^{-1}_d$
is spanned by the monomials in $R_d$ not in $I_d$. Using the
language of inverse systems, we will still call the elements
obtained by the action \emph{derivatives}.

\vskip 2mm

Let $I$ be an artinian
ideal
generated by $r$
  homogeneous polynomials 
  $F_1, \cdots ,F_r\in R=k[x_0,x_1,
  \cdots ,x_n]$ of  degree $d$. Let $I^{-1}\subset
  \mathcal R$ be its Macaulay inverse system. Associated to
  $(I^{-1})_d$ there is a rational map 
$$\varphi _{(I^{-1})_d}:\PP^n \dashrightarrow \PP^{{n+d
\choose d}-r-1}.$$ Its image $\overline{\Image (
\varphi _{(I^{-1})_d})}\subset \PP^{{n+d\choose d}-r-1}$
is the projection of the $n$-dimensional Veronese variety
$V(n,d)$ from the linear system $\langle F_1,\cdots ,F_r \rangle \subset |
\cO _{\PP^n}(d)|$. Let us call it $X_{n,(I^{-1})_d}$.
Analogously,  associated to $I_d$ there is a morphism
$$\varphi _{I_d}:\PP^n \longrightarrow \PP^{r-1}.$$
Note that $\varphi _{I_d}$ is regular because $I$ is artinian. 
Its image ${\Image (\varphi _{I_d})}\subset \PP^{r-1}$
is the projection of the $n$-dimensional Veronese variety $V(n,d)$
from the linear system $\langle(I^{-1})_d \rangle\subset | \cO _{\PP^n}(d)|$.
Let us call it $X_{n,I_d}$. The varieties  $X_{n,I_d}$ and
$X_{n,(I^{-1})_d}$ are usually called  apolar.

We are now ready to state the main result of this section. We have:

\begin{thm}\label{teathm}[The Tea Theorem] Let $I\subset R$ be an artinian
ideal
generated
by $r$ homogeneous polynomials $F_1,...,F_{r}$ of degree $d$.
If 
$r\le {n+d-1\choose n-1}$, then
  the following conditions are equivalent:
\begin{itemize}
\item[(1)] The ideal $I$ fails the WLP in degree $d-1$,
\item[(2)] The  homogeneous forms $F_1,...,F_{r}$ become
$k$-linearly dependent on a general hyperplane $H$ of $\PP^n$,
\item[(3)] The $n$-dimensional   variety
$X_{n,(I^{-1})_d}$ satisfies at least one Laplace equation of order $d-1$.
\end{itemize}
\end{thm}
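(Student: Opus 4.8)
The plan is to prove the equivalences by first disposing of $(1)\Leftrightarrow(2)$, which is essentially immediate, and then concentrating all the work on $(2)\Leftrightarrow(3)$, which is the genuinely new content. For $(1)\Leftrightarrow(2)$ I would simply invoke Lemma~\ref{keylemma}: since $R/I$ is artinian and $I$ is generated in degree $d$, the only degree in which $\times L$ could possibly fail maximal rank \emph{and} the failure be "visible" via the generators is degree $d-1$ (in lower degrees $(R/I)_j=R_j$ and the map is injective since $I_j=0$ forces nothing, and one checks $\dim R_{d-1}\le\dim(R/I)_d$ from the bound $r\le\binom{n+d-1}{n-1}$; in higher degrees the surjectivity of $\times L$ in degree $d-1$ would propagate). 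Actually the cleanest statement is just that Lemma~\ref{keylemma} says $\phi_{d-1}$ fails maximal rank iff $\bar F_1,\dots,\bar F_r$ are $k$-linearly dependent on the general hyperplane $H=V(L)$, which is exactly $(2)$; and "$I$ fails WLP in degree $d-1$" is by Definition~\ref{def of wlp} the statement that $\phi_{d-1}$ fails maximal rank for general $L$. So $(1)\Leftrightarrow(2)$ is a restatement.

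The heart is $(2)\Leftrightarrow(3)$. Here the key computational input is the description, via Euler's formula, of the projective $(d-1)$-st osculating space of the projection $X_{n,(I^{-1})_d}$: at a general point $p=\varphi_{(I^{-1})_d}(a)$, the space $\TC_p^{(d-1)}X_{n,(I^{-1})_d}$ is spanned by the values at $a$ of all partial derivatives of order $\le d-1$ of the vector of forms parametrizing the map, i.e. of a basis $G_1,\dots,G_s$ of $(I^{-1})_d$ where $s=\binom{n+d}{d}-r$. Since the $G_i$ have degree $d$, their derivatives of order exactly $d-1$ are linear forms and span (together with lower-order derivatives, which by Euler's relation are redundant once we also keep $p$ itself) essentially the linear span of $\{\partial^\alpha G_i(a):|\alpha|=d-1,\ i=1,\dots,s\}$. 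The expected dimension of this osculating space is $\binom{n+d-1}{n-1}-1$ (the dimension of the full osculating space of $V(n,d)$ itself, which lives in $\PP^{\binom{n+d-1}{n-1}-1}$ via the $(d-1)$-st Veronese re-embedding of derivatives). So $X_{n,(I^{-1})_d}$ satisfies a Laplace equation of order $d-1$ iff these $(d-1)$-st derivatives span a space of dimension $<\binom{n+d-1}{n-1}$.

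The bridge to $(2)$ is apolarity combined with the classical fact that for a form $F$ of degree $d$, its $(d-1)$-st partial derivatives, viewed as linear forms, are (up to the Macaulay--Matlis pairing) the apolar complement structure: concretely, $\langle F\rangle^\perp$ in degree $1$ of the restricted ring $\bar R=R/(L)$ matches how $F$ "uses" the variable $L$. I would make this precise as follows: the $(d-1)$-st derivatives of $G_1,\dots,G_s$ span all of $\langle x_0,\dots,x_n\rangle^{\otimes}$... — more carefully, the span of $\{\partial^\alpha G_i : |\alpha|=d-1\}$ inside $\mathcal R_1$ is the \emph{orthogonal complement}, under the degree-1 apolarity pairing, of the space $\{u\in R_1 : u\cdot G_i=0\ \forall i \text{ after one more differentiation}\}$; chasing this, $\langle\partial^\alpha G_i\rangle_{|\alpha|=d-1}$ has codimension $\delta$ in $\mathcal R_1\cong k^{n+1}$... this is not quite it either since the target has dimension $\binom{n+d-1}{n-1}$, not $n+1$. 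The correct statement is that the map "evaluate all $(d-1)$-st derivatives at the general point $a$" has image of dimension $\binom{n+d-1}{n-1}-\delta$ precisely when the dual statement — that the forms $F_1,\dots,F_r$ spanning $((I^{-1})_d)^\perp = I_d$ restrict to a $\delta$-dimensional space of linear dependences on the hyperplane $L$ through the point dual to $a$ — holds, and this last is exactly condition $(2)$ (with $\delta\ge 1$). The main obstacle, and where I would spend the most care, is setting up this duality correctly: identifying the cokernel of the $(d-1)$-osculating map of $X_{n,(I^{-1})_d}$ at a general point $[v]$ with the kernel of the restriction map $\langle F_1,\dots,F_r\rangle\to H^0(\cO_H(d))$ for $H=\{v=0\}$ a general hyperplane, i.e. with the space of $k$-linear dependences of $\bar F_1,\dots,\bar F_r$. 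Once that cokernel-equals-kernel identification is in hand, $(2)\Leftrightarrow(3)$ is immediate, and the dimension bookkeeping ($\dim\TC_p^{(d-1)}=\binom{n+d-1}{n-1}-1-\delta$ with $\delta=\dim\ker$) falls out.
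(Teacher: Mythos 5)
Your handling of $(1)\Leftrightarrow(2)$ coincides with the paper's: both statements are restatements of Lemma~\ref{keylemma}, once one notes $\dim R_{d-1}={n+d-1\choose n}\le {n+d\choose n}-r=\dim(R/I)_d$ from the hypothesis on $r$. The problem is $(2)\Leftrightarrow(3)$. You correctly identify \emph{what} must be proved --- that the defect of the $(d-1)$-th osculating space at a general point equals the number of independent linear relations among $\bar F_1,\dots,\bar F_r$ on the dual hyperplane --- but you do not prove it: you explicitly park it as ``the main obstacle'' on which you ``would spend the most care.'' Since that identification \emph{is} the theorem, this is a genuine gap at the center of the argument, and the two attempts you sketch before deferring both miss the correct pairing. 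Two further points need repair. First, the expected dimension of $\TC_x^{(d-1)}X$ for an $n$-fold is ${n+d-1\choose d-1}-1={n+d-1\choose n}-1$, not ${n+d-1\choose n-1}-1$ (the latter binomial is the bound on $r$; conflating them breaks your bookkeeping already for the Togliatti example $n=2$, $d=3$, where the expected dimension is $5$, not $3$). Second, what matches the space of dependences of the $\bar F_i$ is the \emph{kernel} (defect) of the osculating map, not its cokernel inside $(R/I)_d$: that cokernel is $(R/(I,L))_d$, of dimension ${n+d-1\choose n-1}-\dim\langle\bar F_1,\dots,\bar F_r\rangle$, which differs from $r-\dim\langle\bar F_1,\dots,\bar F_r\rangle$ except at the extreme $r={n+d-1\choose n-1}$.

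The missing step is a short application of Macaulay--Matlis duality, and it is cleaner to run it as $(1)\Leftrightarrow(3)$ (as the paper does) rather than $(2)\Leftrightarrow(3)$. Identify $(R/I)_d$ with $((I^{-1})_d)^*$ via the apolarity pairing. If $L\in R_1$ corresponds to the general point $a\in\PP^n$ under this duality, then for $|\alpha|=d-1$ and $G\in(I^{-1})_d$ the value of $\partial^\alpha G$ at $a$ is, up to a nonzero constant, the pairing of $G$ with $Lx^\alpha$; hence the functional ``evaluate the $\alpha$-th partial at $a$'' is exactly the class $[Lx^\alpha]\in(R/I)_d$. Since the lower-order partials are absorbed by Euler's formula, the $(d-1)$-th osculating space of $X_{n,(I^{-1})_d}$ at $\varphi_{(I^{-1})_d}(a)$ is precisely the image of $\times L\colon (R/I)_{d-1}\to(R/I)_d$. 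Therefore the osculating space has dimension $\le{n+d-1\choose n}-2$ (a Laplace equation of order $d-1$) if and only if $\times L$ is not injective, i.e.\ if and only if $I$ fails the WLP in degree $d-1$. With this supplied your outline closes up; without it, it does not.
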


\begin{rem} \rm
Note that, in view of Remark \ref{bound}, the assumption
$r\le {n+d-1\choose
n-1}$ ensures that the Laplace equations obtained in (3)
are not obvious in the sense of Remark \ref{bound}. In the particular case $n=2$, this assumption
gives  $r\leq d+1$.
\end{rem}
\begin{rem}\rm Since the first guess about the statement of the Theorem
emerged during a Tea discussion in Berkeley, we always labeled the result
in our discussions
as the Tea Theorem.
\end{rem}
\begin{proof} The equivalence between (1) and (2)
follows immediately from Lemma \ref{keylemma}. Let us see that (1) is equivalent to (3). Since
$(R/I)_{d-1}=R_{d-1}$ and $\dim R_{d-1}={n+d-1\choose n}=
{n+d\choose n}- {n+d-1\choose n-1}\le {n+d\choose n}-r=\dim
(R/I)_d$, we have that the ideal $I$ fails the WLP in degree $d-1$
if and only if for a general linear form $L\in R_1$ the
multiplication map
\[
\times L: (R/I)_{d-1} \to (R/I)_{d}
\]
is not injective. Via the Macaulay-Matlis duality, the latter is
equivalent to say that the rank of the dual map $(I^{-1})_d
\longrightarrow (I^{-1})_{d-1}$ is $\le {d+n-1\choose n}-1$; which
is equivalent to say that the $(d-1)$-th osculating space
$\TC_x^{(d-1)}X_{n,(I^{-1})_d}$ spanned by all partial derivatives of
  order $\le d-1$ of the given parametrization of $X_{n,(I^{-1})_d}$
  has dimension $\le
{n+d-1\choose n}-2$, i.e. $X_{n,(I^{-1})_d}$ satisfies a Laplace
equation of order $d-1$.
\end{proof}

\begin{rem}\label{Tog} \rm Note that for  $n=2$, $d=3$ and
$I=(x_0^3,x_1^3,x_2^3,x_0x_1x_2)\subset k[x_0,x_1,x_2]$,
  we recover Togliatti's example (see \cite{T1}, \cite{T2} and \cite{FI}).
\end{rem}

\begin{defn} \rm With notation as above,
we will say that $I^{-1}$ (or $I$) defines a \emph{Togliatti system}
if it satisfies the three equivalent conditions in  Theorem \ref{teathm}.
\end{defn}

\begin{ex} \rm (see \cite{V}) 
Let $d=2k+1$ be an odd number and $n=2$. Let $l_1, \ldots, l_d$ be
general linear forms in $3$ variables. Then the ideal
$(l_1^d,\ldots,l_d^d, l_1l_2\cdots l_d)$ is generated by $d+1$
polynomials of degree $d$ and it fails the WLP in degree $d-1$
because by \cite{V}, Th\'{e}or\`{e}me 3.1, $l_1^d,\ldots,l_d^d,
l_1l_2\cdots l_d$ become dependent on a general line $L\subset
\PP^2$. For $d=3$ we recover Togliatti example once more, for
$d>3$ we get non-toric examples. It is interesting to observe that
a similar construction in even degree produces ideals 
which do satisfy
the WLP.
\end{ex}

\begin{ex}\label{trivB}  \rm
Let $n\geq 3$ and $d\geq 3$. Let $I=(LF_1,...,LF_t, G_1,\ldots,G_n)$ where $L$ is a linear form,
$F_1,\ldots,F_t$ are general forms of degree $d-1$ and $G_1,\ldots,G_n$
general forms of degree $d$. If $ {{n+d-2}\choose {n-1}}+1\leq t\leq {{n+d-1}\choose{n-1}}-n$,
then $I$ is artinian and fails WLP in degree $d-1$.  Indeed the number of conditions imposed 
to the forms of degree $d-1$ 
to contain a linear form  is equal to ${{n+d-2}\choose {n-1}}$. With the assumptions made on $t$,
the number of generators  $r=t+n$ is in the range of Theorem \ref{teathm}.
\end{ex}

We will end this section  studying the geometry of some rational surfaces
satisfying at least one Laplace equation of order $2$ and the geometry
of their apolar surfaces.

\begin{ex} \rm In the case of the Togliatti surface the  morphism
  $\varphi _{I_3}: \PP^2 \longrightarrow \PP^{3}$ with
$I_3=(x_0^3,x_1^3,x_2^3,x_0x_1x_2)$ is  not birational. In fact, it
is a triple cover of the
  cubic surface of equation $xyz=t^3$, which is singular at the three
  fundamental points of the plane $t=0$.

  Similarly in the case $n=2$, $d=4$ and $I_4=(x_0^4,x_1^4,x_2^4,
x_0^2x_1^2, x_0x_1x_2^2)$, 
the surface $X_{2,(I^{-1})_4}\subset \PP^9$
has second osculating space of dimension $8$ at a general point.
Also the morphism
$\varphi _{I_4}:\PP^2 \longrightarrow \PP^4$
is not birational; it is a degree $4$ cover of a singular Del Pezzo  quartic,
complete intersection of two quadrics in $\PP^4$.

Similar considerations can be made in the following example, where $n=2$, $d=5$ and $I_5=(x_0^5,x_1^5,x_2^5,
x_0^3x_1^2, x_0^2x_1^3, x_1^2x_2^3)$, but in this case
we get a birational map $\varphi _{I_5}:\PP^2 \longrightarrow \PP^5$.
\end{ex}


\section{The Toric Case} \label{Monomial case}

In this section, we will restrict our attention
to the monomial case. First of all, we want to point
out that for monomial ideals (i.e. the ideals invariants for the natural toric action of $(k^*)^n)$ on $k[x_0,\ldots ,x_n]$)
to test the WLP there
is no need to consider a general linear form. In fact, we have

\begin{prop}
   \label{lem-L-element}
Let $I \subset R:=k[x_0,x_1,\cdots , x_n]$ be an artinian monomial ideal.
Then $R/I$ has the WLP if and only if  $x_0+x_1 + \cdots + x_n$
is a Lefschetz element for $R/I$.
\end{prop}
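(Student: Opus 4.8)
The plan is to exploit the torus action on $R$ that fixes $I$ and combine it with the general philosophy that ``a general linear form can be replaced by any sufficiently generic one,'' but made precise using the monomial structure. One direction is trivial: if $x_0 + \cdots + x_n$ is a Lefschetz element, then $R/I$ has the WLP by definition. So the content is the converse. Assume $R/I$ has the WLP, so that the locus of Lefschetz elements is a nonempty Zariski open subset $U \subseteq (R/I)_1 \cong \mathbb{A}^{n+1}$. I want to show $x_0 + \cdots + x_n \in U$.

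The key step is the following group-theoretic observation. The torus $T = (k^*)^{n+1}$ acts on $R = k[x_0,\dots,x_n]$ by scaling the variables, $t \cdot x_i = t_i x_i$; since $I$ is a monomial ideal it is stable under this action, so $T$ acts on $R/I$ compatibly with the grading and the multiplication maps. If $L = a_0 x_0 + \cdots + a_n x_n$ is a Lefschetz element with all $a_i \neq 0$, then for each $j$ the diagram relating $\times L$ on $(R/I)_j \to (R/I)_{j+1}$ and $\times L' $ on $(R/I)_j \to (R/I)_{j+1}$, where $L' = t\cdot L = a_0 t_0 x_0 + \cdots + a_n t_n x_n$, commutes up to the isomorphisms induced by $t$; hence $\times L'$ has maximal rank for all $j$ as well, i.e.\ $L'$ is again a Lefschetz element. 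Thus $U$ is stable under the $T$-action, and moreover every $L$ with nonzero coordinates can be moved by $T$ to $x_0 + \cdots + x_n$. So it suffices to exhibit \emph{one} Lefschetz element with all coordinates nonzero.

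To produce such an element, I would argue that $U$ cannot be contained in the union of the coordinate hyperplanes $\{a_i = 0\}$. Indeed, $U$ is a nonempty open subset of the irreducible variety $\mathbb{A}^{n+1}$, hence dense; its complement is a proper closed subset, which therefore cannot contain the hyperplane $\{a_0 = 0\} \cup \cdots \cup \{a_n = 0\}$ unless that union is everything, which it is not. Concretely, $U \cap \{a_i \neq 0 \text{ for all } i\}$ is the intersection of two nonempty opens in an irreducible variety, hence nonempty. Pick $L$ in this intersection; it is a Lefschetz element with nonzero coordinates, and by the previous paragraph $x_0 + \cdots + x_n$ is a $T$-translate of $L$, hence also a Lefschetz element.

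The main obstacle — really the only point requiring care — is verifying that the $T$-action genuinely commutes with multiplication in the correct sense: one must check that for $t \in T$ the induced maps $\rho_t \colon (R/I)_j \to (R/I)_j$ satisfy $\rho_t(\bar g) = \overline{t \cdot g}$ and $\rho_{t}(\bar L \cdot \bar g) = \overline{(t\cdot L)} \cdot \rho_t(\bar g)$, so that $\times(t\cdot L) = \rho_{t} \circ (\times L) \circ \rho_t^{-1}$ as maps $(R/I)_j \to (R/I)_{j+1}$; since $\rho_t$ is an isomorphism in each degree, maximal rank is preserved. This is immediate from the multiplicativity of the $T$-action on the polynomial ring and the $T$-stability of $I$, but it is the step that uses the monomial hypothesis in an essential way. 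The density argument and the reduction to a single translate are then formal.
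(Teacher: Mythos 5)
Your argument is correct and is essentially the standard proof of this fact: the paper itself only cites \cite{MMN2}, Proposition 2.2, and the proof given there is exactly your combination of the torus action (rescaling variables to move any Lefschetz element with all coordinates nonzero to $x_0+\cdots+x_n$, using that a monomial ideal is torus-stable) with the density of the nonempty open Lefschetz locus in $(R/I)_1$. No gaps.
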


\begin{proof} See \cite{MMN2};  Proposition 2.2.
\end{proof}

Fix $\PP^n=\Proj(k[x_0,x_1,\cdots ,x_n])$. Denote by $\cL _{n,d}:=|
\cO_{\PP^n}(d)|$
the complete linear system of hypersurfaces of degree $d$ in $\PP^n$
and set $n_d:=\dim (\cL_{n,d})={n+d\choose n}-1$ its projective dimension.
As usual denote by $V(n,d)\subset \PP^{n_d}$ the Veronese variety.

\begin{defn}\rm A linear subspace $\cL \subset \cL _{n,d}$ is called
a {\em monomial linear subspace}
if it can be generated by monomials.
\end{defn}

\vskip 4mm
\noindent {\bf The example of the truncated simplex:}
Consider the linear system of cubics
$$\cL =|\{ x_{i}^2x_{j} \}_{0\le i\ne j\le n}| \subset \cL_{n,3}.$$
Note that $\dim \cL =n(n+1)-1$.  Let $$\varphi _{\cL}:\PP^n  \dashrightarrow
\PP^{n(n+1)-1}$$ be the rational map associated to $\cL$.
Its image $X:=\overline{\Image (\varphi _{\cL})}
\subset
\PP^{n(n+1)-1}$ is (projectively equivalent to)
the projection of the Veronese variety $V(n,3)$ from
the linear subspace $$\cL':= |\langle x_0^3, x_1^3, ...,x_n^3,
\{x_ix_jx_k\}_{0\le i<j<k\le n} \rangle | $$ of $\PP^{{n+3\choose 3}-1}$.
Let us check that $X$ satisfies a Laplace equation of order 2 and
that it is smooth.

Since $\cL $ and $\cL '$ are apolar, we can apply Theorem \ref{teathm}
and we get  that $X$ satisfies a Laplace equation of order 2
if and only if the ideal $I=(x_0^3, x_1^3, ...,x_n^3,
\{x_ix_jx_k\}_{0\le i<j<k\le n})\subset R=k[x_0,x_1,\cdots ,x_n]$
fails the WLP in degree $2$, i.e. for a general linear form $L\in R_1$
the map $\times L:(R/I)_2 \longrightarrow (R/I)_3$ has not maximal rank.
By Lemma \ref{keylemma}, it is enough to see that the restriction
of the cubics $x_0^3, x_1^3, ...,x_n^3, \{x_ix_jx_k\}_{0\le i<j<k
\le n}$ to a general hyperplane become $k$-linearly dependent and, by
Proposition \ref{lem-L-element}, it is enough to check that they
become $k$-linearly dependent when we restrict to the hyperplane
$x_0+x_1+ \cdots +x_n=0$, which follows after a straightforward computation.
An alternative argument, due to the Proposition 1.1 of \cite{P},
is that all  the vertices points in $\Z^{n+1}$, corresponding to the monomial basis of $\cL$, are contained in the quadric
with equation $2\left(\sum_{i=0}^nx_i^2\right)-5\left(\sum_{0\le i<j\le n}x_ix_j\right)=0$.

$X$ is a projection of the blow-up of $\PP^n$ at the $n+1$ fundamental points,
embedded via the linear system of cubics passing through the blown-up  points.
Using the language of \cite{GKZ},
 it is the projective toric variety $X_A$, associated to the set $A$ of vertices of the
 lattice polytope $P_n$  defined as follows: let $\Delta_n$ be the standard simplex
 in $\mathbb R^n$, consider $3\Delta_n$,
 then $P_n$ is obtained by removing all vertices so that the new edges have all length one:
 $P_n$ is a \lq\lq truncated simplex''.
 By the smoothness criterium  Corollary 3.2, Ch. 5, in \cite{GKZ} (see also \cite{P}), it follows that $X$ is smooth.
 For instance, in the case $n=2$ $P_2$ is the punctured hexagon of Figure 2.

\vskip 4mm

In \cite{I}, pag. 12, G. Ilardi
formulated a conjecture, stating that
the above example is the only smooth (meaning that the variety $X$ is smooth) monomial Togliatti system
of cubics of dimension $n(n+1)-1$.
We will show that the conjecture is incorrect, but we underline that it was useful to us because
it pointed in the right direction.

We start by producing a  class of examples of monomial Togliatti systems of cubics, holding for any $n\ge 3$,
we will then give the classification  of smooth and quasi-smooth monomial Togliatti systems for $n=3$ in the Theorem \ref{toricn3}.
As a consequence, the conjecture in \cite{I} at page 12 cannot hold, in the sense that that the list
in \cite{I} is too short and we have to enlarge it. Correspondingly, in the remark \ref{finalrem}, we
propose a larger list for any $n$, which reduces to the list of  the Theorem \ref{toricn3} for $n=3$.

\noindent {\bf A second example:} Consider the linear system of cubics
$\cM =| \{ x_{i}^2x_{j} \}_{0\le i\ne j\le n, \{i,j\}\neq \{0,1\}}\cup\{x_0x_1x_i\}_{2\le i\le n}| \subset \cL_{n,3}$.
Note that $\dim \cM =n^2+2n-
4$.
Let $$\varphi _{\cM}:\PP^n  \dashrightarrow
\PP^{n^2+2n-4}$$ be the rational map associated to $\cM$.
Its image $X:=\overline{\Image (\varphi _{\cM})}\subset
\PP^{n^2+2n-4}$ is (projectively equivalent to)
the projection of the Veronese variety $V(n,3)$ from
the linear subspace $$\cM':=|\langle x_0^3, x_1^3, ...,x_n^3,x_0^2x_1,x_0x_1^2,
\{x_ix_jx_k\}_{0\le i<j<k\le n, (i,j)\neq (0,1)} \rangle |$$ of $ \cM
_{n,3}=\PP^{{n+3\choose 3}-1}$.
Arguing as in the previous example we can check  that $X$ satisfies a Laplace equation of order 2 and
that it is smooth.
The quadric containing all the vertices points
 in $\Z^{n+1}$ has equation
$2\left(\sum_{i=0}^nx_i^2\right)-5\left(\sum_{0\le i<j\le n}x_ix_j\right)+9x_0x_1=0$.

Notice that  $n^2+2n-4=n^2+n-1$ if and only if $n=3$. Hence for $n=3$ we have got a counterexample to Ilardi's conjecture.
Nevertheless $X$ cannot be further projected without acquiring singularities; hence,
 for $n>3$ this example does not give  a counterexample to Ilardi's conjecture.
See section 6 for counterexamples to Ilardi's conjecture for any $n\ge 3$.

 $X$ is a projection of the blow-up of $\PP^n$ at  $n-1$ fundamental points plus
the line through the remaining two fundamental points,
embedded via a linear system of cubics.
Also in this case, as in the previous one,
$X$ is a projective toric variety of the form $X_A$. Now there is a
 lattice polytope $P$  obtained from $3\Delta_n$,
 removing $n-1$ vertices and the  opposite edge. $A$ is the set of the vertices of $P$
 together with the $n-1$ central points of the $2$-faces adjacent to the removed edge.
By the above smoothness criterium, $X$ cannot be further projected without acquiring singularities.

\vskip 4mm

\subsection{Geometric point of view and trivial linear systems}

With notation as in Section \ref{Tea-THM}, we consider now a monomial artinian
ideal $I$, generated by a subspace $I_d\subset Sym^dV^*$ (where $V\simeq{\mathbb C}^{n+1}$).  Since we are in the monomial case, we will also assume $I^{-1}_d\subset  Sym^dV^*$.
\begin{rem} \rm \label{sing} Note that the assumption that $I$ is artinian is equivalent to  $I^{-1}\cap V(n,d)=\emptyset$. Indeed, if
$I$ is not artinian, then there exists a point $z\in \PP^n$ which is a common zero of all polynomials in $I$.
Then its Veronese image $v_d(z)$ belongs to $V(n,d)\cap I^{-1}$. Here $v_d(z)$ must be interpreted as $\sum z_\alpha
\partial_\alpha$ where $\alpha$ denotes a multiindex of degree $d$. Conversely, if $v_d(z) \in I^{-1}$, then $(\sum z_\alpha \partial_\alpha)(F)=0$ for all $F\in I_d$, therefore, being $I$ generated by $I_d$, $z$ is a common zero of the polynomials of $I$.
\end{rem}

Let $X$ be the closure of the image of $\varphi_{I_d^{-1}}$, it can be seen geometrically as the projection of $V(n,d)$ from $I_d$.
The exceptional locus of this projection is $I\cap V(n,d)$ and corresponds via $v_d$ to the base locus  of the linear system $\langle I_d^{-1} \rangle$.  $X$ can also be interpreted as (a projection of)
the blow up of $V(n,d)$ along $I\cap V(n,d)$.  Since $I$ is artinian, in the toric case $\varphi_{I_d^{-1}}$ is never regular, because $I$ has to contain the $d$-th powers of the variables. On the contrary, the map $\varphi_{I_d}$ is regular.

In this situation 
we assume that   all $2$-osculating spaces of $X$ have dimension strictly
less that ${n+2}\choose 2$; i.e. $X$ satisfies a Laplace equation of order $2$. Since
the $2$-osculating spaces of $V(n,d)$ have the expected dimension, this means that $I$
meets the $2$-osculating space $\TC_x^{(2)}V(n,d)$ for all $x\in V(n,d)$.

Let $d=3$. $V(n,3)\subset \PP(Sym^3(V^*))$ represents the homogeneous polynomials of degree $3$ which are cubes of a linear form. Let $\sigma_2V(n,3)$ denote its secant variety; its general element can be interpreted both as a sum of two cubes of linear forms and as a product of three linearly dependent linear forms. Let $\pi_{I_3}: V(n,3)\dashrightarrow X$ denote the projection with center $I_3$. We connect the singularities of $X$ to the reciprocal position of $I_3$ and  $\sigma_2V(n,3)$.
\begin{prop}\label{secant}
If $I\cap\sigma_2V(n,3)$ strictly contains $\sigma_2(I\cap V(n,3))$ then $X$ is singular.
\end{prop}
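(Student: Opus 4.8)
The plan is to produce an explicit singular point of $X$ by examining how the projection $\pi_{I_3}\colon V(n,3)\dashrightarrow X$ degenerates near a point of $\sigma_2V(n,3)$ that meets the center $I$. By hypothesis choose $p\in I\cap\sigma_2V(n,3)$ with $p\notin\sigma_2(I\cap V(n,3))$; since the latter set is closed we may take $p$ to be a general point of an irreducible component of $I\cap\sigma_2V(n,3)$ not contained in $\sigma_2(I\cap V(n,3))$, which will give us room below to assume that the points of $V(n,3)$ associated to $p$ are general. (Here, as in the statement, we also write $I$ for the projective linear subspace $\PP(I_3)\subseteq\PP(Sym^3V^*)$.) Note that $p\notin V(n,3)$, for otherwise $p\in I\cap V(n,3)\subseteq\sigma_2(I\cap V(n,3))$. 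Recall that $\sigma_2V(n,3)$ is exactly the union of the genuine chords $\overline{q_1q_2}$ ($q_1\neq q_2\in V(n,3)$) together with the tangential variety $\bigcup_{q\in V(n,3)}\TC^{(1)}_qV(n,3)$; so $p$ lies on a genuine chord or on some embedded tangent space, and we treat these cases in turn.

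\emph{Genuine chords.} Say $p\in\overline{q_1q_2}$ with $q_1\neq q_2$. I first claim $q_1,q_2\notin I$: if, say, $q_1\in I$, then since $I$ is linear and $p\in I$, either $p=q_1$ (impossible, as $p\notin V(n,3)$) or the whole line $\overline{q_1q_2}=\overline{q_1p}$ lies in $I$, forcing $q_2\in I$ and hence $p\in\sigma_2(I\cap V(n,3))$, a contradiction. Thus $\pi_{I_3}$ is regular at $q_1$ and at $q_2$; and because the line $\overline{q_1q_2}$ meets the center $I$ at $p$, it is collapsed by $\pi_{I_3}$ to a single point $x_0:=\pi_{I_3}(q_1)=\pi_{I_3}(q_2)$. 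Equivalently, $\varphi_{(I^{-1})_3}=\pi_{I_3}\circ v_3$ identifies the two distinct points $v_3^{-1}(q_1)\neq v_3^{-1}(q_2)$ of $\PP^n$. Since $\varphi_{(I^{-1})_3}$ is generically one-to-one onto $X$ and (by the choice of $p$) $q_1,q_2$ are general points of $V(n,3)$, it is a local isomorphism near each $q_i$; hence two distinct smooth $n$-dimensional branches, with distinct tangent spaces, pass through $x_0$, so $\dim T_{x_0}X>n$ and $X$ is singular at $x_0$.

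\emph{Tangent lines.} Suppose $p$ lies on no genuine chord, so $p\in\TC^{(1)}_qV(n,3)$ for some $q\in V(n,3)$ with $p\neq q$. Then $\overline{qp}\subseteq\TC^{(1)}_qV(n,3)$ is a tangent line to $V(n,3)$ at $q$ meeting $I$ in the point $p$. If $q\notin I$, then $\pi_{I_3}$ is regular at $q$, but the tangent direction along $\overline{qp}$ lies in $\langle q,I\rangle$ and so is killed by $d\pi_{I_3}$ at $q$; hence $\pi_{I_3}|_{V(n,3)}$ is not an immersion at $q$, and, again using that $\varphi_{(I^{-1})_3}$ is generically injective, $X$ is not smooth at $\pi_{I_3}(q)$. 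If $q\in I$, then $q$ lies on the blown-up locus $I\cap V(n,3)$, and one runs the same contraction/non-immersion analysis on the exceptional divisor over $q$ in $B:=\mathrm{Bl}_{I\cap V(n,3)}V(n,3)$ — the collapsed tangent direction $\overline{qp}$ forcing the image of that divisor under $B\to X$ to be singular.

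The heart of the matter, and the main obstacle, is the passage from the infinitesimal fact ($\pi_{I_3}$ glues two distinct points, or its differential drops rank) to the genuine conclusion that $X$ is \emph{singular}. This needs the morphism $B\to X$ — equivalently the rational map $\varphi_{(I^{-1})_3}$ — to be birational, for then a failure of local injectivity or of immersivity really produces a singular point: by Zariski's main theorem a birational finite morphism onto a smooth (hence normal) variety is an isomorphism. Establishing this birationality (or, to avoid it, making the estimate $\dim T_{x_0}X>n$ rigorous by taking $p$ general enough that the two branches through $x_0$, resp.\ the image of the exceptional divisor, are in sufficiently general position) is the technical core; the sub-case $q\in I$ of the tangent-line case — where the bad point of $X$ comes from the exceptional locus of the blow-up rather than from a smooth point of $V(n,3)$ — requires the same kind of care.
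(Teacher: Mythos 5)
Your argument is essentially the paper's: the proof there is two sentences long, asserting that the points of $I\cap\sigma_2V(n,3)$ give rise to nodes of $X$, except those of $\sigma_2(I\cap V(n,3))$, because $I\cap V(n,3)$ is the indeterminacy locus of $\pi_{I_3}$, together with the observation that $\sigma_2(I\cap V(n,3))\subset I$ since $I$ is an ideal --- which is exactly your ``genuine chords'' claim that $q_1,q_2\notin I$. The birationality/immersivity issue you single out as the technical core is not addressed in the paper either (it treats ``a collapsed honest secant gives a node'' as standard), so your more cautious write-up is, if anything, more explicit than the original.
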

\begin{proof}
The points of $I\cap\sigma_2V(n,3)$ give rise to nodes of $X$, except those of $\sigma_2(I\cap V(n,3))$, because $I\cap V(n,3)$ is the indeterminacy locus of $\pi_{I_3}$. Note that $\sigma_2(I\cap V(n,3))\subset I$, because $I$ is an ideal.
\end{proof}

Among Togliatti systems, 
not necessarily monomial, we detect two kinds which we call trivial ones.

\begin{defn} A Togliatti system of forms of degree $d$ is trivial of type A if
there exists a form $Q$ of degree $d-1$  such that, for every $L\in V^*$, $QL\in I$,
that is $Q$ belongs to the saturation of $I$.
\end{defn}

Note that the ideal generated by a quadratic form $Q$ defines a trivial Togliatti
system of cubics of type A which is not artinian, but adding $s\ge n$ suitable forms
$F_1,\ldots ,F_s\in Sym^3V^*$ we get a linear system $Q\langle x_0,\ldots ,x_n\rangle+\langle F_1,\ldots
,F_s\rangle$ which is an artinian trivial Togliatti system of type A.

In the toric case, if $Q$ is a quadratic monomial, then $Q$ has rank $\le 2$,
therefore $I=(Q)+(F_1,\ldots ,F_s)$ meets $\sigma_2V(n,3)$ in infinitely many points
outside $I$.
In particular, by Proposition  \ref{secant}, a toric trivial Togliatti system of cubics of type
A cannot parameterize a smooth variety.

\begin{ex}\rm
Consider the $12$-dimensional
linear system of cubics
$$\cL =\langle x_0^2x_1, x_0^2x_2,x_0^2x_3,x_1^2x_0,
x_1^2x_2,x_1^2x_3,x_2^2x_0, x_2^2x_1,x_2^2x_3,x_0x_1x_3,
x_0x_2x_3, x_1x_2x_3 \rangle \subset \cL_{3,3}.$$ Let $\varphi
_{\cL}:\PP^3 \longrightarrow \PP^{11}$ be the rational map
associated to $\cL$. Its image $X:=\overline{\Image (\varphi
_{\cL})}\subset \PP^{11}$ is (projectively equivalent to) the
projection  from the linear subspace $$\cL':= \langle x_0^3,
x_1^3, x_2^3, x_3^3, x_0x_1x_2, x_0x_3^2, x_1x_3^2,x_2x_3^2\rangle
$$  of the Veronese variety $V(3,3)\subset \PP( \cL
_{3,3})=\PP^{19}$. We easily check that $X$ is not smooth. In fact
$ Sing(X)=\{(0,0,0,1)\}$. Finally, let us check that $X$ satisfies
a Laplace equation of order 2. Since $x_0^3, x_1^3, x_2^3,
(x_0+x_1+x_2)^3, x_0x_1x_2, x_0(x_0+x_1+x_2)^2,
x_1(x_0+x_1+x_2)^2,x_2(x_0+x_1+x_2)^2$ are $k$-linearly
dependent,  applying Lemma \ref{keylemma} and Proposition
\ref{lem-L-element} we get that the ideal $I=(x_0^3, x_1^3, x_2^3,
x_3^3, x_0x_1x_2, x_0x_3^2, x_1x_3^2,x_2x_3^2 )\subset
R=k[x_o,x_1,x_2,x_3]$ fails the WLP in degree $2$. Therefore,
using that  $\cL $ and $\cL '$ are apolar and Theorem
\ref{teathm}, we conclude that $X$ satisfies a Laplace equation of
order 2. Alternatively, we could observe that $X$ is ruled,
because the variable $x_3$ appears in the polynomials of the
linear system $\cL$  only up to degree $1$, or, alternatively, the
polynomials of $\cL'$ contain all monomials of degree $\geq 2$ in
$x_3$.
\end{ex}

\begin{defn}
A  Togliatti system of forms of degree $d$ is trivial of type B when there is a point $p\in V(n,d)$ such that
the intersection of $I$ with the $(d-1)$-osculating space at  $p$ meets
all the other $(d-1)$-osculating spaces.
\end{defn}

A trivial Togliatti system of type B is given in Example \ref{trivB}.
To explain this, let us recall that, if $p\in V(n,d)$ is identified to $L^d$, where $L\in V^*$, then $\TC_p^{(1)}V(n,d)$ is formed by the multiples of $L^{d-1}$,  $\TC_p^{(2)}V(n,d)$ by the multiples of $L^{d-2}$, and so on. From this description it follows that a sufficient condition to have a Togliatti system of cubics of type B is  $$\dim_k( I\cap \TC_p^{(2)}) > {{n+2}\choose 2}-n-1={{n+1}\choose 2},$$ because this number is the codimension of the intersection of two osculating spaces inside one of them.
 We found several cases when this happens even if
$\dim I\cap \TC_p^{(2)}= {{n+2}\choose 2}-n-1$.

 \begin{rem} \rm G. Ilardi has a different notion of trivial Laplace equations in \cite{I}
 Remark 1.2,
 which corresponds to varieties embedded in a space of dimension smaller than the
 expected dimension
 of the osculating spaces, see Remark \ref{bound}. Still another definition can be
 found in \cite{FI}.
 \end{rem}

\begin{prop}
Let $I$ be a monomial artinian
ideal $I$, generated in degree $3$. Assume that $I$ is trivial of type B of the form
$I=(LF_1, \ldots , LF_t, G_1, \ldots , G_n)$, where $L, F_i, G_j$ are monomials of degrees $1,2,3$ respectively, and $t> {{n+1}\choose 2}$. Then
the variety $X$ is singular.
\end{prop}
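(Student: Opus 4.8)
The plan is to reduce the monomial, degree-$3$, type-B case to Proposition \ref{secant} by producing enough points of $I\cap\sigma_2V(n,3)$ outside $I\cap V(n,3)$. The hypothesis is that $I=(LF_1,\ldots,LF_t,G_1,\ldots,G_n)$ with $L$ a variable, say $L=x_0$, the $F_i$ squarefree-or-not quadratic monomials, and $t>\binom{n+1}{2}=\binom{n+2}{2}-n-1$. First I would recall from the discussion preceding the statement that $\TC_p^{(2)}V(n,3)$, for $p=L^3$, consists of the multiples $LH$ with $H$ of degree $2$; its dimension is $\binom{n+2}{2}$, and the condition $t>\binom{n+1}{2}$ says precisely that $I\cap\TC_p^{(2)}$, which contains $\langle LF_1,\ldots,LF_t\rangle$, has dimension exceeding the codimension $\binom{n+1}{2}$ of the intersection of two osculating $2$-spaces inside one of them. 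Hence for a general (indeed for every) point $q=M^3\in V(n,3)$ the subspaces $I\cap\TC_p^{(2)}$ and $\TC_q^{(2)}$ meet nontrivially: by dimension count in $\TC_p^{(2)}\cong\PP^{\binom{n+2}{2}-1}$, two subspaces of codimensions $\le\binom{n+1}{2}-1$ and $\binom{n+1}{2}$ must intersect.

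Next I would interpret this intersection geometrically. An element of $\TC_p^{(2)}\cap\TC_q^{(2)}$ is a polynomial divisible both by $L$ and by $M$, hence (for $L\ne M$) of the form $LM\cdot N$ with $N$ linear; such a product of three linear forms, being evidently a degenerate cubic, lies on $\sigma_2V(n,3)$ — recall from the paragraph before Proposition \ref{secant} that the general point of $\sigma_2V(n,3)$ is exactly a product of three linearly dependent linear forms, and any product of three linear forms lies in the closure. So each such point lies in $I\cap\sigma_2V(n,3)$. To apply Proposition \ref{secant} I must check these points are not forced into $\sigma_2(I\cap V(n,3))$: since $M$ ranges over a positive-dimensional family while $I\cap V(n,3)$ is the finite set of Veronese images of the base locus of $\langle I_3^{-1}\rangle$ (it is zero-dimensional because the $G_j$ together with $LF_i$ cut out finitely many points, $I$ being artinian), the family of points $LMN$ we obtain cannot be contained in $\sigma_2$ of that finite set. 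Thus $I\cap\sigma_2V(n,3)$ strictly contains $\sigma_2(I\cap V(n,3))$, and Proposition \ref{secant} gives that $X$ is singular.

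The one point needing care — and the main obstacle — is the case $L=M$, i.e. ensuring that the intersection $I\cap\TC_q^{(2)}$ we extract is genuinely new rather than just the trivial overlap $I\cap\TC_p^{(2)}$ reappearing when $q$ happens to specialize to $p$; and more substantively, checking that we actually get a full one-parameter family of distinct points $LMN\in\sigma_2V(n,3)\cap I$ as $M$ varies over all linear forms not proportional to $L$. For this I would argue as follows: fix $M$ generic; the linear span $I\cap\TC_p^{(2)}$ has projective dimension $\ge\binom{n+1}{2}$ inside $\TC_p^{(2)}\cong\PP^{\binom{n+2}{2}-1}$, while $\TC_p^{(2)}\cap\TC_q^{(2)}$, the set of cubics divisible by $LM$, has projective dimension $n$ (one $N$ per linear form), so their intersection inside $\TC_p^{(2)}$ is nonempty of dimension $\ge\binom{n+1}{2}+n-\big(\binom{n+2}{2}-1\big)=0$; hence for each such $M$ there is at least one cubic $LMN\in I$, and letting $M$ vary produces the desired $\ge 1$-dimensional family, with the points genuinely distinct for generic $M$ (they involve the factor $M$). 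Since this family lies in $I\cap\sigma_2V(n,3)$ but manifestly not in $\sigma_2$ of the finite set $I\cap V(n,3)$, Proposition \ref{secant} applies. The routine verifications — that $I$ is artinian in the stated range, that the base locus is finite, and the exact dimension bookkeeping — I would relegate to a line or two, citing Example \ref{trivB} and Remark \ref{sing}.
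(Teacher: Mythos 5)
Your reduction to Proposition \ref{secant} breaks down at its key step: a product $LMN$ of three linear forms does \emph{not}, in general, lie on $\sigma_2V(n,3)$. The paper's description, which you quote, is that the general point of $\sigma_2V(n,3)$ is a product of three \emph{linearly dependent} linear forms; the locus of such products (the binary cubics) is already closed, of dimension $2n+1=\dim\sigma_2V(n,3)$, whereas products of three arbitrary linear forms sweep out a locus of dimension $3n$, so for $n\ge 2$ a product of three independent linear forms is not on the secant variety. (Concretely, $xyz\notin\sigma_2V(2,3)$ --- this is precisely why the generator $xyz$ of the Togliatti system creates no node and that surface is smooth.) Your dimension count does produce, for each $M$, a nonzero cubic $LMN\in I_3\cap\TC_p^{(2)}\cap\TC_q^{(2)}$, but nothing forces $N\in\langle L,M\rangle$, so these points need not lie on $\sigma_2V(n,3)$ and Proposition \ref{secant} cannot be applied to them. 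Two smaller problems: $I\cap V(n,3)$ need not be finite for a monomial artinian ideal (if $I\supseteq(x_0,x_1)^3$ it contains a whole twisted cubic of cubes of linear forms), and the type-B condition of meeting every $2$-osculating space only accounts for the Laplace equation, which is already assumed; by itself it carries no information about singularities.

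The paper's proof goes through the \emph{first} osculating (tangent) space instead. Normalizing $L=x_0$ and $G_i=x_i^3$, if some $F_i$ is a multiple of $x_0$ other than $x_0^2$, say $F_i=x_0x_j$ with $j\ne0$, then $LF_i=x_0^2x_j$ is a multiple of $L^2$, hence lies in $\TC_p^{(1)}V(n,3)$ at $p=L^3$ but not on $V(n,3)$; a projection whose centre meets an embedded tangent space away from the indeterminacy locus is singular at the image of $p$. The hypothesis $t>{{n+1}\choose{2}}$ leaves exactly one configuration escaping this argument, namely $t={{n+1}\choose{2}}+1$ with $\{F_i\}=\{x_0^2\}\cup\{\text{all quadratic monomials in }x_1,\dots,x_n\}$, and that case is excluded by showing the corresponding polytope is not simple (\cite{GKZ}, Ch.\ 5, Prop.\ 4.12), so $X$ is not even quasi-smooth. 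If you wish to salvage a route through Proposition \ref{secant}, you must locate generators genuinely lying on $\sigma_2V(n,3)$ --- i.e.\ of the form $x_0^2x_j$ or $x_0x_j^2$, which a count shows do occur under your hypothesis --- and then still verify that they avoid $\sigma_2(I\cap V(n,3))$; the tangent-space argument sidesteps all of this.
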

\begin{proof}
Since $I$ is monomial, we can assume that $L=x_0$ and $G_i=x_i^3$, for all $i\geq 1$. We want to prove that $I$ meets the tangent space at $p=L^3$ outside $I\cap V(n,3)$, giving a singularity of $X$. We are done if among the polynomials $F_1, \ldots , F_t$ there is a multiple of $x_0$ different from $x_0^2$. In view of the assumption on $t$, the unique case to check separately is when $t={{n+1}\choose 2}+1$ and $\{F_1, \ldots , F_t\} $ contains $x_0^2$ and all monomials of degree $2$ in $x_1, \ldots , x_n$. But in this case, looking at the corresponding polytope $P$, we see that
the vertex $x_0^2x_1$ has edges in $P$ connecting to the $2n-2$ vertices
$x_0^2x_2,\ldots x_0^2x_n, x_1^2x_2,\ldots , x_1^2x_n$, so that  for $n\ge 3$
we get $2n-2>n$, hence
the polytope $P$ is not simple and the variety $X$ is not smooth
(even not quasi smooth) by \cite{GKZ}, chap. 5, Proposition 4.12.
\end{proof}

Note that a monomial  artinian ideal $I$ generated in degree three contains the monomials $x_i^3$ for $i=0,\ldots ,n$.

We are now ready to give  a complete classification of monomial Togliatti systems of cubics in the cases $n=2$ and $3$.

In the case $n=2$, let $k[a,b,c]$ be the base ring, we recall that the only non-trivial monomial Togliatti system  is $a^3, b^3, c^3, abc$
(see \cite{FI,V}). In view of  next classification theorem for $n=3$, we remind
also that all toric surfaces are quasi-smooth according to \cite{GKZ} chap. 5, \S 2.

\begin{thm}\label{toricn3}
Let $I\subset k[a,b,c,d]$ be a monomial artinian
ideal of degree $3$,
 such that the
corresponding threefold $X$
is smooth and does satisfy a Laplace equation of degree $2$.
Then, up to a permutation of the coordinates, $I^{-1}$ is one of the following three examples:

\begin{itemize}
\item[(1)] $(a^2b, a^2c, a^2d, ab^2, ac^2, ad^2, b^2c, b^2d, bc^2, bd^2, c^2d, cd^2)$, $X$ is of degree $23$, in $\PP^{11}$,
it is isomorphic to $\PP^3$ blown up in the $4$ coordinate points;
\item[(2)] $(abc, abd,  a^2c, a^2d,  ac^2, ad^2, b^2c, b^2d, bc^2, bd^2, c^2d, cd^2)$, $X$ is
of degree $18$, in $\PP^{11}$, it is isomorphic to $\PP^3$ blown up in the line $\{c=d=0\}$ and in the two points
$(0,0,1,0)$ and $(0,0,0,1)$;
\item[(3)] $(abc, abd, acd, bcd, a^2c, ac^2, a^2d, ad^2, b^2c, bc^2, b^2d, bd^2)$, $X$ is of degree $13$, in $\PP^{11}$,
it is isomorphic to $\PP^3$ blown up in the two  lines $\{a=b=0\}$ and $\{c=d=0\}$.
\end{itemize}
 \noindent Moreover, if we substitute ``smooth'' with ``quasi-smooth'' (see \cite{GKZ} chap. 5, \S 2)
we have the further cases:
\begin{itemize}
\item[(4)] $(acd, bcd,  a^2c, a^2d,  ac^2, ad^2, b^2c, b^2d, bc^2, bd^2, c^2d, cd^2)$, this example is trivial of type A (indeed the apolar ideal contains $ab*(a,b,c,d)$); $X$ is of degree $18$, in $\PP^{11}$,
 and its  normalization  is isomorphic to $\PP^3$ blown up in the line $\{c=d=0\}$ and in the two points
$(0,0,1,0)$ and $(0,0,0,1)$;
\item[(4')] a projection of case (2) removing one or both of the monomials $abc, abd$, or
a projection of case (3) removing a subset of the monomials $(abc, abd, acd, bcd)$,
or a projection of case (4) removing one or both of the monomials $(acd, bcd)$.
\end{itemize}
\end{thm}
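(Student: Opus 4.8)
The plan is to run an exhaustive but organized search over monomial artinian ideals $I \subset k[a,b,c,d]$ generated in degree $3$, exploiting two structural reductions. First, by Remark \ref{sing} and the fact that $I$ is artinian, $I$ must contain $a^3,b^3,c^3,d^3$, so the search is really over which of the remaining $16$ monomials of degree $3$ in four variables (the monomials with at least two distinct variables) lie in $I_3$; equivalently, $(I^{-1})_3$ is spanned by some subset of those $16$ monomials. Second, by Theorem \ref{teathm} combined with Proposition \ref{lem-L-element}, the Togliatti (Laplace-equation) condition is equivalent to the cubics $a^3,b^3,c^3,(a+b+c)^3$ together with the chosen monomials becoming $k$-linearly dependent after restriction to the hyperplane $\{a+b+c+d=0\}$, i.e.\ after the substitution $d \mapsto -(a+b+c)$. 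This converts condition (3) of the Tea Theorem into a concrete rank computation in the $10$-dimensional space $k[a,b,c]_3$, which can be carried out for each candidate monomial linear system; symmetrically, one records the dimension of $\langle I_3^{-1}\rangle$ restricted to the hyperplane. The range hypothesis of Theorem \ref{teathm} forces $r = \dim(I_d) \le \binom{n+d-1}{n-1} = 6$ for $n=3,d=3$, but since a smaller $I$ has a larger (hence generically less degenerate) $I^{-1}$ and we are classifying the \emph{dual} systems, it is cleaner to enumerate $(I^{-1})_3$ of all dimensions $\ge 12$ and let the Laplace condition cut down the list — I will organize the enumeration up to the obvious $S_4$-symmetry permuting $a,b,c,d$.

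The second ingredient is the smoothness test. Each such $X = X_A$ is a projective toric variety and I will use the combinatorial smoothness criterion cited in the paper: Corollary 3.2, Ch.\ 5 of \cite{GKZ} (equivalently Perkinson \cite{P}), which says $X_A$ is smooth iff at every vertex of the associated lattice polytope the $n$ primitive edge vectors form a basis of the lattice, and quasi-smooth iff the polytope is simple (every vertex meets exactly $n$ edges). Concretely, the polytope is the convex hull in $\Z^4$ (cut by $\sum x_i = 3$) of the exponent vectors of the chosen monomials, and one checks each vertex. The propositions already proved in the paper do most of the elimination: Proposition \ref{secant} (together with the remark on rank-$\le 2$ monomial quadrics) kills every type-A trivial system as non-smooth, and the Proposition preceding Theorem \ref{toricn3} shows that a type-B system $I=(LF_1,\dots,LF_t,G_1,\dots,G_n)$ with $t>\binom{n+1}{2}=3$ gives a non-simple polytope, hence is not even quasi-smooth. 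So after discarding those, only a short list of surviving monomial systems remains, and for each one I would: (i) verify the Laplace equation via the hyperplane-restriction rank computation above; (ii) verify smoothness (or merely quasi-smoothness, for cases (4),(4$'$)) via the vertex/edge lattice-basis check; (iii) identify $X$ geometrically by recognizing its base locus $I \cap V(3,3)$ — via $v_3$ this is the base locus of $\langle I_3^{-1}\rangle$ — as the union of coordinate points and coordinate lines described in (1)–(3), and compute $\deg X$ as the normalized volume of the polytope (equivalently, the number of lattice points of a generic translate, or directly the degree of the image under $\varphi_{I_d^{-1}}$).

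The main obstacle is the completeness of the enumeration: one must be sure that \emph{no} monomial linear system has been overlooked, and this requires a careful case analysis on how many monomials of each ``shape'' ($x_i^2 x_j$ versus $x_i x_j x_k$) appear and how they are distributed among the variables, pruned by $S_4$-symmetry and by the two triviality propositions. I expect the argument to proceed by first fixing the number $k$ of squarefree monomials $x_ix_jx_k$ present (there are only $4$ of them, so $k \in \{0,1,2,3,4\}$), then within each value of $k$ analyzing which ``mixed'' monomials $x_i^2x_j$ must be adjoined to keep the ideal artinian and to force the Laplace dependence, and finally checking the smoothness criterion — the bookkeeping is finite but delicate, and getting the case $t = \binom{n+1}{2}+1$ exactly on the boundary (where $\dim I \cap \TC_p^{(2)}$ is the expected value but $X$ is still singular) is the subtle point already isolated in the Proposition before the theorem. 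Once the finite list is pinned down, items (i)–(iii) above are routine; the geometric identification in (1)–(3) follows because blowing up $\PP^3$ at coordinate points and/or coordinate lines and re-embedding by an appropriate sub-linear-system of $|\cO(3)|$ reproduces exactly the monomial systems listed, and smoothness of the blow-up plus the $\GG KZ$ criterion guarantee the projection stays smooth.
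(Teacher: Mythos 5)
Your overall strategy coincides with the paper's: reduce to a finite search over subsets of the $16$ non-cube monomials of degree $3$, detect the Laplace equation via the Tea Theorem, prune with the two triviality propositions, and test smoothness/quasi-smoothness with the polytope criteria of \cite{GKZ}. The one methodological difference --- certifying the Laplace condition by restricting the generators of $I$ to $a+b+c+d=0$ rather than by exhibiting a quadric through the exponent vectors (Proposition 1.1 of \cite{P}) --- is explicitly noted in the paper as an equivalent alternative, so that is fine. But there is a concrete numerical error with consequences: for $n=d=3$ one has ${n+d-1\choose n-1}={5\choose 2}=10$, not $6$, so the admissible range is that $I_3$ consists of the four cubes plus between $1$ and $6$ further monomials, i.e.\ $\dim(I^{-1})_3$ runs over $10,\dots,15$. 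Your decision to enumerate only $\dim(I^{-1})_3\ge 12$ truncates the search exactly where the quasi-smooth projections of case (4$'$) live (dimensions $10$ and $11$), and also where the four $13$-generator type-A systems sit, which the paper must explicitly rule out as not quasi-smooth.

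More importantly, the completeness of the list --- which is the actual content of the classification --- is never established. You correctly identify it as the main obstacle and sketch a case analysis organized by the number of squarefree monomials, but you do not carry it out, and the two triviality propositions alone do not eliminate all unlisted candidates: there remain type-B systems on the boundary $t={n+1\choose 2}$ (which the paper notes can still be Togliatti), and systems that are neither type A nor type B whose polytope nonetheless has four faces meeting at a vertex. The paper itself closes this step by a computer check verifying that in every remaining case the polytope fails to be simple. As it stands, your proposal is a faithful reconstruction of the framework and of the verification of cases (1)--(4), but it stops short of the finite exhaustion that turns the framework into a proof of the classification.
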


\begin{proof}
Consider the apolar ideal $I$. Since it is monomial and artinian, $I$ contains $(a^3, b^3, c^3, d^3)$ and
$j$ generators more, with $1\le j\le 6$.

Due to the Proposition 1.1 of \cite{P}, in order to check that the four cases satisfy a Laplace equation of degree $2$,
it is enough to check that the vertices points in $\Z^4$ are contained in a quadric.
This is $Q:=2(a^2+b^2+c^2+d^2)-5(ab+ac+ad+bc+bd+cd)$ in the case (1)
(it corresponds to a sphere with the same center of the tetrahedron),
it is $Q+9ab$ (a quadric of rank three) in the case (2),
it is $Q+9ab+9cd=(- 2a - 2b + c + d)(- a - b + 2c + 2d)$  in the case (3),
and it is $ab$ in the case (4). An alternative approach for proving that the four cases satisfy a Laplace equation of
                                degree 2 could be to apply directly Theorem \ref{teathm}(2).

Every case corresponds to a convex polytope contained in the full tetrahedron
with vertices the powers $a^3$, $b^3$, $c^3$, $d^3$.

This tetrahedron has four faces like in the Figure 1.
\begin{figure}
    \centerline{\includegraphics[width=40mm]{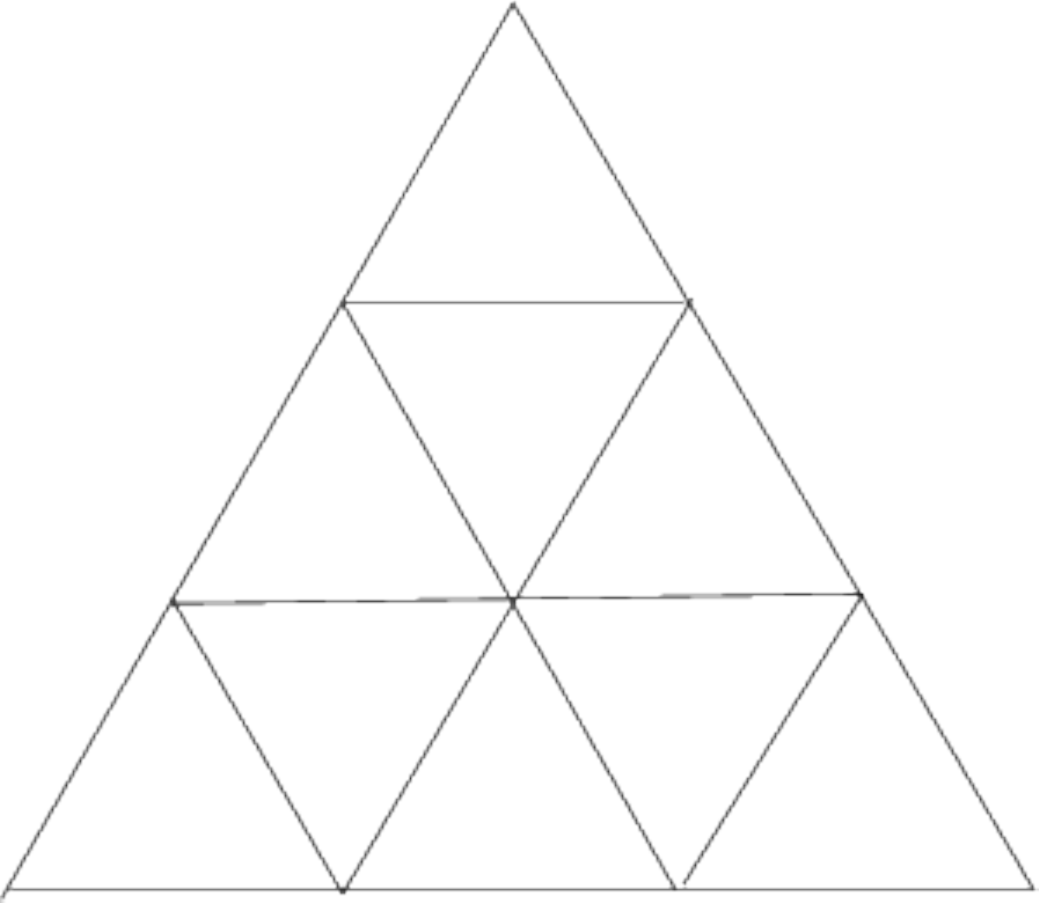}}
    \caption{full triangle}
    \end{figure}

The convex polytope corresponding to the case (1) is the truncated tetrahedron already described.
It is instructive to describe its faces, which are four
``punctured'' hexagons like in the Figure 2
\begin{figure}
    \centerline{\includegraphics[width=40mm]{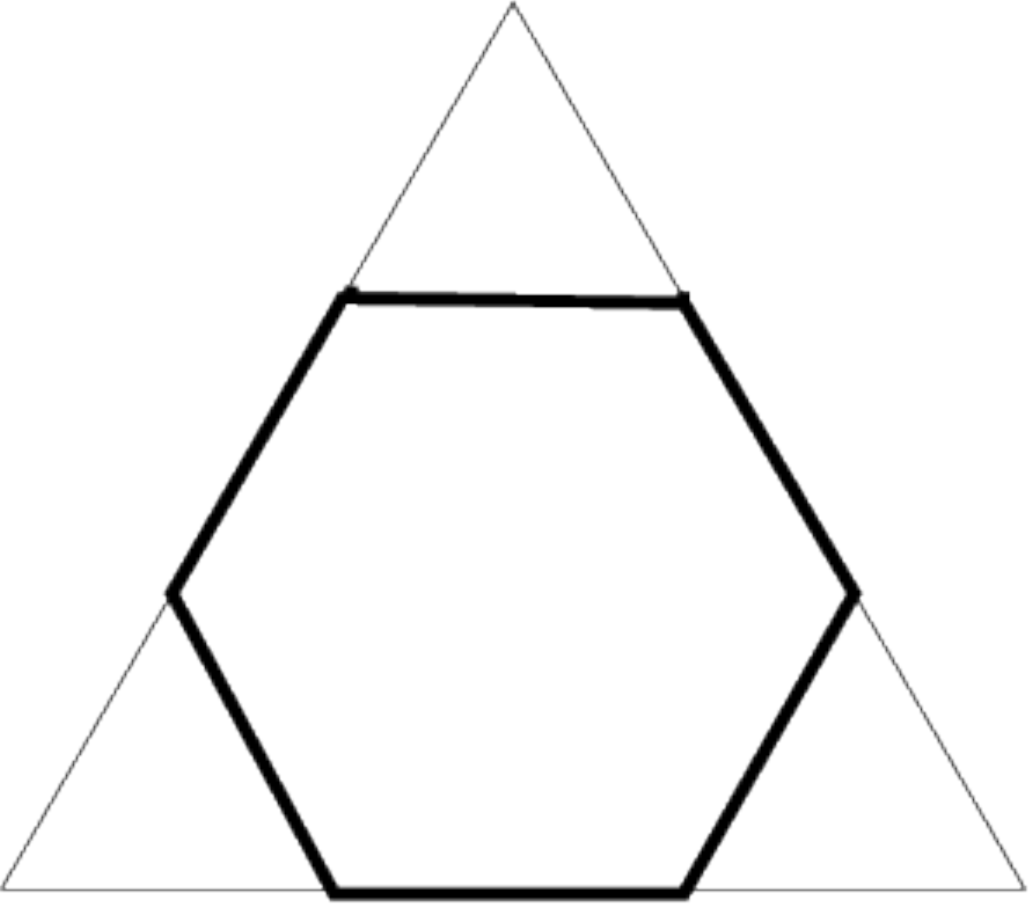}}
    \caption{punctured hexagon}
    \end{figure}
and four smaller regular triangles.
It is the case (4) in the Theorem 3.5 of \cite{P}. It has degree $3^3-4=23$ in $\PP^{11}$.
Note that the projection of this example is not quasi-smooth because, when we remove a vertex,
 the resulting polytope has four faces meeting in a vertex (see \cite{GKZ}, chap. 5, Proposition 4.12).

The case (2) corresponds to the case (5) in the Theorem 3.5 of Perkinson, \cite{P}.
 It has degree $18$ in $\PP^{11}$.
The degree computation follows from the fact that the equivalence of a line
in the (excess) intersection of three cubics in $\PP^3$ counts seven,
according to the Example 9.1.4 (a) of \cite{F}. So $3^3-7-2=18$.

The convex hull has the following faces:
one rectangle, two full trapezoids, two punctured hexagons and
two triangles.

The picture of the full trapezoid is like in the Figure 3
and it is important to remark that all the three vertices of the longer side
are included.
\begin{figure}
    \centerline{\includegraphics[width=40mm]{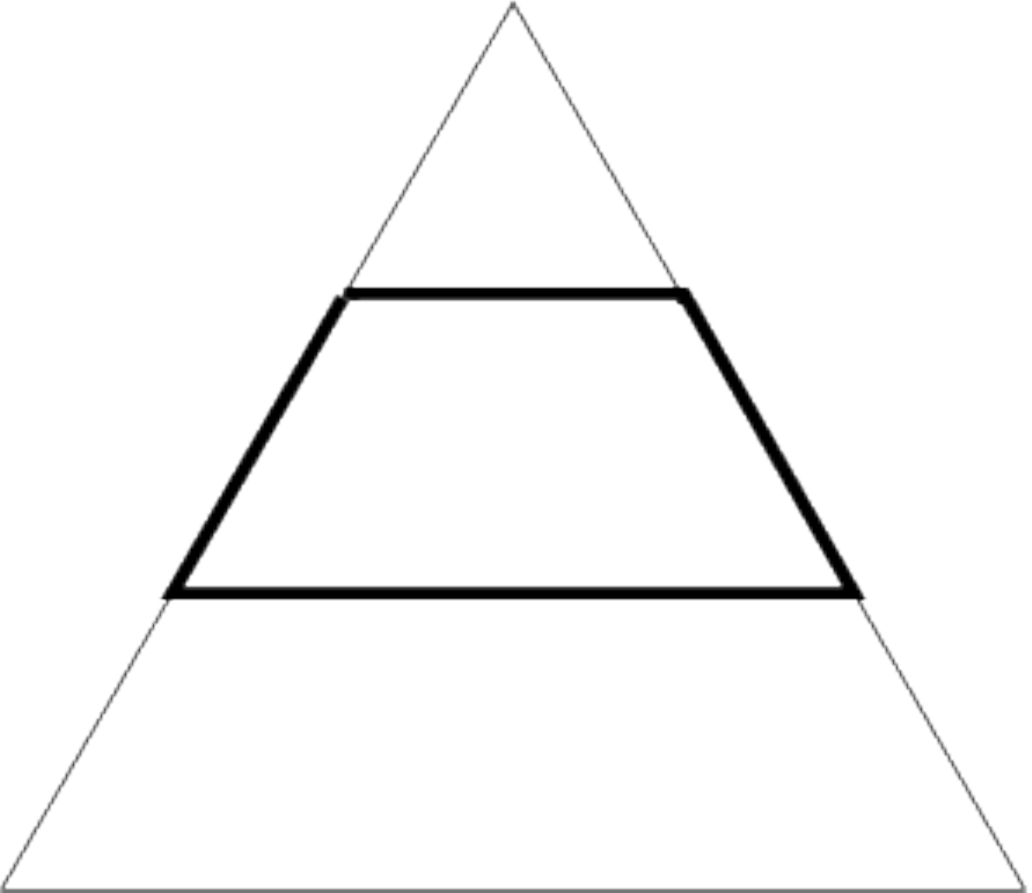}}
    \caption{full trapezoid}
    \end{figure}

The projection of this case is never smooth,
but when removing the mid vertices of the long sides of the trapezoids,
we get a quasi smooth variety, appearing in (4') of our statement.
To understand why these cases are not smooth, note that the condition (a) of the Corollary 3.2 of Chap. 5
of \cite{GKZ} is not satisfied when $\Gamma$ is one of the vertices of the long side of the trapezoid.

The case (3) can be seen both as the case (2) or (3)   in the theorem 3.5
of Perkinson, \cite{P}.
Our variety $X$ is $\PP^3$ blown up on two skew lines $L_1$ and $L_2$.
To see it as a particular case of case (2) of \cite{P}, consider that there are two natural maps from
$X$ to $\PP^1$, with fiber given by
the Hirzebruch surface isomorphic to $\PP^2$ blown up in one point.

Fix a line $L_i$. The map takes a point $p$ to the plane spanned by $L_i$ and p.
These planes through $L_i$ make the target $\PP^1$.

To see it as a particular case of case (3) of \cite{P}, consider that through a general point $p$ there is a unique line meeting
$L_1$ in $p_1$ and $L_2$ in $p_2$.
The map from $X$ to the quadric surface $\PP^1\times\PP^1$
takes  $p$ to the pair $(p_1, p_2)$.

The convex polyhedron has six faces, four full trapezoids and two full (long) rectangles.
The argument regarding the projection is analogous to the previous case and we omit it.

The case (4) does not appear  in the Theorem 3.5 of Perkinson, \cite{P} because it is not smooth.

The convex hull has the following faces:
one rectangle, two punctured trapezoids, two full hexagons and
two triangles.
The presence of the punctured trapezoids is crucial for the non smoothness, exactly as
we saw in the projection of the case (2).

A computer check shows that this list is complete,
in all the remaining cases the convex polytope has at least four faces meeting in some vertex.

Let us just underline
that there are
exactly four monomial Togliatti (cubic) systems with $13$ generators,
their apolar ideals are obtained by adding to $(a^3, b^3, c^3, d^3)$ the monomials $a^2*(b, c, d)$ and their cyclic permutations.
They are trivial of type $A$.

The faces are three full trapezoids, one full hexagon.
The convex hull is topologically equivalent to the Figure 4,
where the four meeting faces are evident,
so it is not quasi smooth.
\begin{figure}\label{hexatri}
    \centerline{\includegraphics[width=40mm]{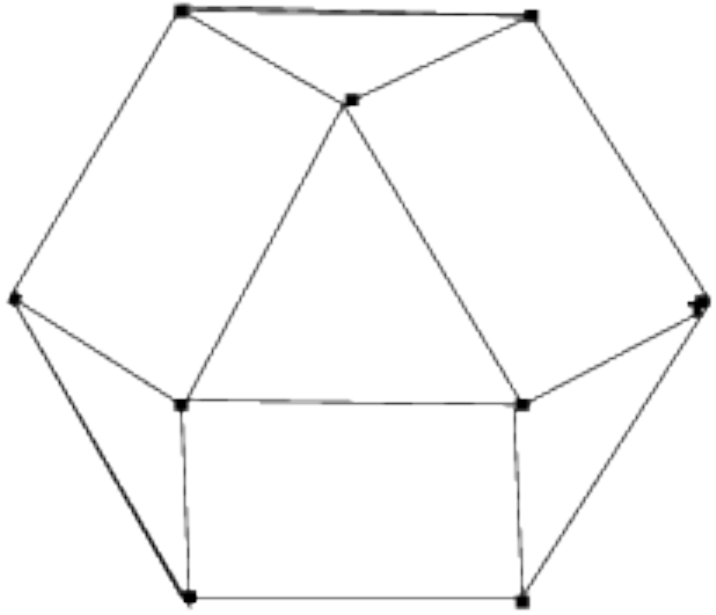}}
\caption{non quasi smooth cases with $13$ vertices}
        \end{figure}
\end{proof}

\begin{rem} \rm
The computations have been performed using Macaulay2 \cite{M}.
\end{rem}


\section{Bounds on the number of generators} \label{bounds}

In this section we concentrate on the case $n=2$. We will see how, using Theorem
\ref{teathm}, it is possible to translate in geometric terms a result expressed in
purely algebraic terms involving WLP.

Let $\cL$  be a linear system  of curves of degree $d$ and (projective) dimension
$N\leq {{d+1}\choose{2}}-1$, defining a map  $\phi_{\cL}: \PP^2 \rightarrow \PP^N$
having as image a surface $X$ which satisfies exactly one Laplace equation of order
$d-1$.

With notations as in Theorem \ref{teathm}, let $I^{-1}$ be the ideal generated by the
equations of the curves in $\cL$ and $I$ its apolar system, generated by $r$
polynomials.

Note that if $\cL$ is  a Togliatti system with $r=3$ ,  then $\cL$ is trivial of type
A and $I$ is not artinian. The Togliatti example described in Remark \ref{Tog} is a
non trivial example with $r=4$ and $I$ artinian. It is a classical result that this is
the only non trivial example with $d=3$ (see \cite{T2} and \cite{FI}).

We consider now the case $r=4$ with $d\geq 4$.

\begin{thm}\label{r=4}
Let $I\subset R:=k[x,y,z]$ be an artinian ideal generated by
$4$ homogeneous polynomials of degree $d\geq 4$.
 Then
\begin{itemize}
\item[(1)] $I$ satisfies the WLP in degree $d-1$,
\item[(2)] if $d$ is not multiple of $3$, then $I$  satisfies the WLP
everywhere,
\item[(3)] if $d$ is multiple of $3$ but not of $6$, then there exists
$I$ which fails the WLP.
\end{itemize}
\end{thm}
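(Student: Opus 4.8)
I would treat all three parts through the rank--$3$ kernel bundle $K$ of $(F_1,\dots ,F_4)$ on $\PP^2$, which satisfies $c_1(K)=-d$ and $c_2(K)=d^2$. Let $(a_1,a_2,a_3)$, with $a_1\le a_2\le a_3\le 0$ and $a_1+a_2+a_3=-d$, be its generic splitting type. Since $r=4\le d+1$ when $d\ge 4$, Lemma~\ref{keylemma}, the Tea Theorem~\ref{teathm} and the equivalence ``WLP $\Leftrightarrow a_{r-1}<0$'' make statement $(1)$ equivalent to $a_3<0$. For the WLP in the remaining degrees one uses that $(R/I)_{d+m}\cong H^1(K(m))$ for all $m$ and that multiplication by a linear form $L$ becomes $H^1(K(m))\xrightarrow{\ \times L\ }H^1(K(m+1))$; so $(2)$ and $(3)$ are about whether, for a general $L$, all these maps have maximal rank, which is governed by the splitting type of $K$ (equivalently by the Hilbert function of $\bigoplus_m H^1(K(m))$).

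\textbf{Part (1).} Dualising $0\to\cO(-d)\to\cO^4\to K^{*}\to 0$ and pulling back the Euler sequence of $\PP^3$ along the morphism $\varphi_{I_d}=(F_1:F_2:F_3:F_4)\colon \PP^2\to\PP^3$ (a morphism exactly because $I$ is artinian) gives $K^{*}\cong\varphi_{I_d}^{*}\,T_{\PP^3}(-1)$. Hence $a_3=0$ means that the curve $\varphi_{I_d}(L)$ spans at most a plane of $\PP^3$ for a general line $L\subset\PP^2$. I would exclude this. If $\varphi_{I_d}(L)$ is a line for general $L$, then $X:=\overline{\varphi_{I_d}(\PP^2)}$ contains the line joining two general points of $X$, so $X$ coincides with its secant variety; being a surface in $\PP^3$, $X$ must then be a plane, so $F_1,\dots ,F_4$ are dependent --- a contradiction. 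Otherwise $\varphi_{I_d}(L)$ spans exactly a plane $\Pi_L$ (and, as $a_2\le -1$ here, uniquely so); this yields a rational map $\psi\colon (\PP^2)^{*}\dashrightarrow (\PP^3)^{*}$, $L\mapsto [\Pi_L]$, whose defining forms have degree bounded in terms of $d$ and which sends the pencil of lines through a point $p$ into the plane $\varphi_{I_d}(p)^{\perp}$. Bounding $\dim\overline{\mathrm{Im}\,\psi}$ and exploiting $\varphi_{I_d}^{*}\cO_{\PP^3}(1)=\cO_{\PP^2}(d)$ with $d\ge 4$ forces $X$ to be a plane or a developable surface (cone or tangent developable), each incompatible with $I$ artinian having four independent generators of degree $d\ge 4$. \textbf{Excluding $a_3=0$ in this ``planar'' case is the main obstacle}; it is precisely where $d\ge 4$ is used, the $d=3$ exception being Togliatti's $(x^3,y^3,z^3,xyz)$, whose kernel has splitting type $(-2,-1,0)$.

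\textbf{Parts (2) and (3).} Given $(1)$ one already has $a_3\le -1$; combined with $a_1\le a_2\le a_3\le -1$ and $a_1+a_2+a_3=-d$ this confines the splitting type to a short list, and $K^{*}=\varphi_{I_d}^{*}T_{\PP^3}(-1)$ being globally generated (so each $a_i\le 0$, the extreme cases again meaning $X$ is ruled by lines) shortens the list further. For $3\nmid d$ only ``balanced enough'' types survive: the values $c_1=-d$, $c_2=d^2$ then pin down every $h^1(K(m))$, and one checks degree by degree that $H^1(K(m))\xrightarrow{\times L}H^1(K(m+1))$ is injective or surjective for a general $L$, which is $(2)$; I expect this to be a finite numerical verification rather than a genuine difficulty. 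For $(3)$, when $d$ is a multiple of $3$ but not of $6$, a forbidden unbalanced type (or the perfectly balanced type $(-d/3,-d/3,-d/3)$ with a parity defect in $\bigoplus_m H^1(K(m))$) actually occurs and actually breaks maximal rank in some degree $\ge d$: I would exhibit it explicitly --- taking either the general ideal generated by four forms of degree $d$, or a monomial one obtained from Togliatti's by the substitution turning $x^3,y^3,z^3,xyz$ into $x^{d},y^{d},z^{d},(xyz)^{d/3}$ --- and verify directly that $\times L$ drops rank, the hypothesis $6\nmid d$ (i.e.\ $d/3$ odd) being exactly what converts the extremal-rank situation into an honest failure of the WLP.
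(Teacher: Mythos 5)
Your framework --- the rank-three kernel/syzygy bundle, its generic splitting type, the reduction of (1) to $a_3<0$, and the example $(x^d,y^d,z^d,(xyz)^{d/3})$ for (3) --- coincides with the paper's. But both substantive steps are left unproved, and the routes you indicate would not close them. For (1) you must rule out $a_3=0$, i.e.\ that $\varphi_{I_d}$ maps a general line into a plane of $\PP^3$. Planarity of general line-images alone does not force $X$ to be a plane or a developable surface: Togliatti's surface $xyz=t^3$ has every general line-image planar and is neither, so the entire weight of your argument rests on how $d\ge 4$ enters, and you give no mechanism --- you yourself flag this as ``the main obstacle''. The paper's mechanism is a dichotomy you never invoke: either the syzygy bundle $\cE$ is not semistable, and then $I$ has the WLP by Theorem 3.3 of \cite{BK}; or $\cE$ is semistable, and then the Grauert--M\"ulich theorem bounds the consecutive gaps of the generic splitting type by $1$, which together with $a_1+a_2+a_3=-d$ gives $a_3\le 1-d/3$, negative exactly for $d\ge 4$ (for $d=3$ the bound allows $a_3=0$, which is Togliatti's case). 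Without semistability there is no control whatsoever on the splitting type.

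For (2), the claim that $c_1,c_2$ ``pin down every $h^1(K(m))$'' is false --- Chern classes do not determine cohomology, nor does the generic splitting type determine the module $\bigoplus_m H^1(K(m))$ --- and for large $d$ the constraints $a_1\le a_2\le a_3\le -1$, $\sum a_i=-d$ leave many splitting types, not a short list. The actual proof again splits on semistability: in the unstable case WLP is automatic; in the semistable case with $3\nmid d$ Grauert--M\"ulich forces the balanced type and Theorem 2.2 of \cite{BK} gives WLP; and in the semistable case with $d=3\lambda$ and unbalanced type one needs the vanishings $H^0(\cE(4\lambda-2))=H^0(\cE(4\lambda-1))=H^2(\cE(4\lambda-2))=0$, coming from semistability of $\cE$ and $\cE^*$, to run the restriction sequence and localize the unique possible failure in degree $4\lambda-2=d+\lambda-2\ge d$. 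Those vanishings are what your ``finite numerical verification'' would have to supply. Finally, for (3) your example is the correct one, but its failure of WLP is a genuinely nontrivial fact (the paper cites Corollary 7.4 of \cite{MMN2}); ``verify directly'' together with a heuristic ``parity defect'' is not a proof.
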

\begin{proof}
Let $I=(F_1,\ldots,F_4)$ and denote by $\cE$ the syzygy bundle of $F_1,\ldots,F_4$;
i.e. $\cE$ is  the rank three
bundle on $\PP^2$ with $c_1(\cE)=-4d$, which enters in the exact sequence
\begin{equation} 0\rightarrow \cE\rightarrow \cO_{\PP^2}(-d)^4 \rightarrow
\cO_{\PP^2}\rightarrow 0.
\end{equation}
From \cite{BK}, Theorem 3.3, if $\cE$ is not semistable, then $I$ has the WLP.
So we assume that $\cE$ is semistable and consider the normalized bundle
$\cE_{norm}=\cE(k)$ with $k=[4d/3]$. We distinguish three cases, according to the
congruence class of $d$ modulo 3.
If $d\equiv 1$ mod $3$, then $c_1(\cE_{norm})=-1$, hence by the Theorem of
Grauert-M\"ulich it follows that
the restriction of $\cE_{norm}$ to a general line $L$ is $\cE_{norm}\mid_L\simeq
\cO_L^2\oplus \cO_L(-1)$. Then by \cite{BK}, Theorem 2.2,  $I$ has WLP. Similarly, if
$d\equiv 2$ mod $3$, then $c_1(\cE_{norm})=-2$, and on a general line
$\cE_{norm}\mid_L\simeq \cO_L\oplus \cO_L(-1)^2$. Finally, assume that $d=3\lambda$,
$\lambda\geq 2$. There are two possibilities for $\cE_{norm}\mid_L$: it is isomorphic
either to $\cO_L^3$, and we conclude as in the two previous cases, or to
$\cO_L(-1)\oplus\cO_L\oplus \cO_L(1)$. Hence $\cE\simeq\cE_{norm}(-4\lambda)$ and
$\cE\mid_L\simeq
\cO_L(-1-4\lambda)\oplus\cO_L(-4\lambda)\oplus\cO_L(1-4\lambda)$.
Consider the  exact sequence
\begin{equation}\label{ex}
0\rightarrow \cE\rightarrow \cE(1)\rightarrow \cE\mid_L(1)\rightarrow 0,
\end{equation} and its twists.
The only critical situation is obtained twisting by $4\lambda-2$, it is isomorphic to
\begin{equation}
0\rightarrow \cE(4\lambda-2)\rightarrow \cE(4\lambda-1)\rightarrow
\cO_L(-2)\oplus\cO_L(-1)\oplus\cO_L\rightarrow 0,
\end{equation}
where the second arrow is the multiplication by $L$. By the semistability of $\cE$ we
get $H^0(\cE(4\lambda-2))=H^0(\cE(4\lambda-1))=(0)$. Also $H^2(\cE(4\lambda-2))=(0)$:
indeed, by Serre's duality,
$H^2(\cE(4\lambda-2))\simeq H^0(\cE^*(-4\lambda-1))$, and this is zero by the
semistability of $\cE^*$, because
$c_1(\cE^*(-4\lambda-1)=-3$. Therefore the cohomology exact sequence of (\ref{ex})
becomes:
 \begin{equation}
 0\rightarrow k\rightarrow H^1(\cE(4\lambda-2))\rightarrow
 H^1(\cE(4\lambda-1))\rightarrow k \rightarrow 0.
 \end{equation}
 where $k$ is the base field.
 But $H^1(\cE(4\lambda-2))\simeq (R/I)_{4\lambda-2}$ and $H^1(\cE(4\lambda-1))\simeq
 (R/I)_{4\lambda-1}$, so $I$ fails WLP in degree $4\lambda-2=d+(\lambda-2)$. With
 similar arguments we get that this is the only degree in which $I$ fails WLP, so in
 particular WLP always holds in degree $d-1$.
 Finally, Corollary 7.4 of \cite{MMN2} shows that the ideal $(x^d,y^d,z^d,x^\lambda
 y^\lambda z^\lambda)$, with $d=3\lambda$ odd, fails WLP.
 \end{proof}

 \begin{rem}\label{optimal} \rm
 (1)   Part (2) of Theorem \ref{r=4} was stated for the monomial case
  in \cite{MMN2}; Theorem 6.1. Analogous proof holds for homogeneous polynomials
  non necessarily monomials and we include here for seek of completeness.

(2) U. Nagel has pointed out to us that if $d$ is a multiple of 6 and $I$ is a monomial ideal 
then $I$ does have the WLP. This follows from
Theorem 6.3 in \cite{CN}.

  (3) Theorem \ref{r=4} is optimal, i.e. for all $d\geq 4$ and
$5\leq r\leq d+1$ there exist examples of ideals
$I$ generated by $r$ polynomials
of degree $d$ which fail the WLP in degree $d-1$.

Let $I=(xF,yF,zF,G_1,\ldots ,G_{r-3})$ where $F$ is a homogeneous polynomial with
$\deg F=d-1$ and
$G_1,\ldots ,G_{r-3}$ are general forms of degree $d$. I is an artinian ideal because
$r\geq 5$, and $I^{-1}$ defines
a surface satisfying a Laplace equation of order $d-1$.
\end{rem}

Hence, applying Theorems \ref{teathm}  and \ref{r=4} we get that there do not exist
surfaces $X\subset \PP^{{{d+2}\choose{2}}-4}$ with all $(d-1)$-th osculating spaces
 of dimension less than expected, while there exist examples of such surfaces
in $\PP^N$ for all $N<{{d+2}\choose{2}}-4$.

  We observe that it possible to find smooth surfaces as in Remark \ref{optimal}, for
  instance taking $F=x^{d-1}+y^{d-1}+z^{d-1}$
   and $G_1=x^d$, $G_2=y^d$, $G_3=z^d$.


   \section{Final Comments} \label{final comments}

   A further interesting project is the classification of all Togliatti linear systems of cubics on $\PP^n$, in the monomial case, accomplished here for $n\le 3$ (see Theorem \ref{toricn3}).   It is possible to generalize the three examples in Theorem \ref{toricn3} constructing
   suitable projections of blow ups of $\PP^n$ along unions of linear spaces of
   codimension $\geq 2$ corresponding to partitions of the $n+1$ fundamental points.

   Among the three examples in Theorem \ref{toricn3}, the third one is a ruled threefold,
   while the first two are not. How can we distinguish the ruled examples from the
   non-ruled ones? Since all ruled varieties satisfy Laplace equations of all orders (see
   Remark \ref{ruled}), the non-ruled ones are much more interesting to find.

   The    second
case of Theorem   \ref{toricn3}    generalizes to $n\ge 4$ and
   gives for any $n\ge 3$ a counterexample to Ilardi's conjecture in \cite{I}; pag. 12.
In fact, we have

\begin{ex}  \rm   \label{inpn}
We consider the monomial artinian ideal
$$I=(x_0,x_1,...,x_{n-2})^3+
(x_{n-1}^3,x_n^3,x_0x_{n-1}x_n,x_1x_{n-1}x_n,...,x_{n-2}x_{n-1}x_n)\subset
k[x_0,...,x_n].$$  Since $\dim I_3={n+1\choose 3}+n+1$, we get
that $\dim (I^{-1}_3)=n(n+1)$. Let $X$ be the closure of the image
of $\varphi _{I^{-1}_3} $ which can be seen as the projection of
$V(n,3)$ from $I_3$. $X$ is a smooth $n$-fold in $\PP^{n(n+1)-1}$
isomorphic to $\PP^n$ blown up at the linear space 
$x_{n-1}=x_n=0$ and in the two points $(0,...,0,1,0)$ and
$(0,...,0,1)$. Moreover, it easily follows from Theorem
\ref{teathm} that $X$ satisfies a Laplace equation of degree 2.
A quadric in $\mathbb Z^{n+1}$ containing the
vertices of the corresponding polytope, analogous to the one in the proof of Theorem 4.10, case
(2), has equation: $2(x_0^2+\ldots
+x_n^2)-5(\sum_{i,j=0, i<j}^ n
x_ix_j)+9(\sum_{i,j=0, i< j}^{n-2} x_ix_j)=0$.\end{ex}

\begin{rem}\label{finalrem}\rm
The examples of Theorem \ref{toricn3}  and Example \ref{inpn} can be seen as special cases of a  class of smooth monomial Togliatti systems of cubics. Let $E_0, \ldots, E_n$ be the fundamental points in $\PP^n$, and let $\Pi$ be a partition of the set $\{E_0, \ldots,E_n\}$ such that each part contains at
most $n-1$ points. Let us consider the blow up of $\PP^n$ along  the linear subspaces generated by the parts of $\Pi$ and its embedding with the cubics. Since we are performing a blow up along a torus invariant subscheme, we get a toric variety, which corresponds to a polytope $P$: it is the $n$-dimensional simplex truncated along the faces associated to the blown up spaces. Finally let us consider the projection from the points corresponding to the centres of the full hexagons in $P$. The toric variety $X$ obtained in this way is smooth.
We conjecture that all smooth monomial Togliatti systems of cubics are obtained in this way.
\end{rem}


\end{document}